\theoremstyle{plain}
\newtheorem{theorem}{Theorem}[section]
\newtheorem{lemma}[theorem]{Lemma}
\theoremstyle{definition}
\theoremstyle{remark}
\newtheorem{remark}[theorem]{Remark}
\pgfplotsset{compat=1.18}
\title{Quantitative Spectral Stability for an Embedded Annulus under Coupled Curve Shortening and $2$D Ricci Flows}
\author{MOHAMMADJAVAD HABIBIVOSTAKOLAEI \\ \small Institute of Mathematics, Henan Academy of Sciences \\ \texttt{mjhabibi@hnas.ac.cn}}
\date{} 
\begin{document}
\maketitle

\begin{abstract}
We study the spectral stability of Dirichlet eigenvalues on an embedded annulus whose boundary evolves by curve shortening flow while the ambient surface evolves under the two dimensional Ricci flow using variational formulas, Rellich--type identities, and harmonic capacity methods, we relate eigenvalue variations to geometric deficit and modulus. We establish quantitative bounds comparing the spectrum of the evolving annulus with that of a flat cylinder of equal modulus. As a consequence, we obtain geometric stability and a spectral gap estimate controlled by the deficit functional.\footnote{AMS Classification: 35P15; 53C44; 58J50.\\
Keywords: Laplace eigenvalue, Dirichlet boundary problems, Annulus, Ricci flow, Curve shortening flow.}
\end{abstract}

\tableofcontents
\bigskip

\section{Introduction and Motivation}
Consider $M$ as a smooth, compact, oriented Riemannian surface without boundary. It may carry either a fixed metric $g$, or a time--dependent metric $g\left(t\right)$ evolving by a $2$D Ricci flow: $\partial_{t}g = -2Kg$, where $K$ is the Gaussian curvature. $A$ denotes for an annulus embedded in $M$, bounded by two disjoint smooth simple closed curves $\Gamma_{0} \cup \Gamma_{1}$. In this manuscript, we are interested in a case that the boundary curves are evolving under curve shortening flow (CSF). Roughly speaking, at each time $t$, we pick a domain $A\left(t\right) \subset M$ diffeomorphic to an annulus, bounded by two disjoint smooth simple closed curves $\partial A\left(t\right) = \Gamma_{0}\left(t\right) \cup \Gamma_{1}\left(t\right)$, such that the boundary curves are evolving under CSF, i.e., 
\begin{align*}
\partial_{t}\Gamma_{i} = \kappa \nu, \,\,\,\,\, i=1,2,
\end{align*}
where $\kappa$ is geodesic curvature. This means, if $V$ denotes the outward normal velocity on $\partial A\left(t\right)$, then a point $x$ on $\partial A$, at each time $t$ it moves as $x_{t} = V\nu$, where $\nu$ is the outward unit normal of $A$.\\
During this manuscript, $u$ is harmonic capacity potential i.e., $u$ solves $\Delta_{g\left(t\right)}u = 0$ in $A\left(t\right)$, and it satisfies the boundary condition $u|_{\Gamma_{0}\left(t\right)} = 0$ and $u|_{\Gamma_{1}\left(t\right)} =1$. Also consider $E\left(t\right) = \frac{1}{2}\int_{A\left(t\right)}|\nabla u|^{2} d\mu_{g\left(t\right)}$ as an energy functional, where $h\left(t\right) = \frac{1}{2E\left(t\right)}$ is the modulus and $\mathcal{D}\left(t\right) = \frac{1}{2}\int_{A\left(t\right)}|\operatorname{Hess} u|^{2} d\mu_{g\left(t\right)}$ denotes the deficit functional. \\
For each $t$, let $\left(\lambda\left(t\right), \phi\left(t\right)\right)$ be a simple Dirichlet eigenpair for $-\Delta_{g\left(t\right)}\phi = \lambda \phi$ on $A\left(t\right)$, i.e.,
\begin{align*}
-\Delta_{g\left(t\right)}\phi = \lambda \phi \,\,\,\,\, \text{in}\,\,\, A\left(t\right),\,\,\,\,\,\, \phi|_{\partial A\left(t\right)} = 0,
\end{align*}
where we consider normalization 
\begin{align*}
\int_{A\left(t\right)}\phi^{2} d\mu_{g\left(t\right)} =1,
\end{align*}
to make sure of smoothly using of Kato--Rellich theory and analytic perturbations.

\subsection{Preliminaries}
The Ricci flow on a smooth Riemannian manifold $(M, g(t))$ is the evolution equation for the metric tensor given by
\begin{equation*}
\frac{\partial g_{ij}}{\partial t} = -2 \operatorname{Ric}_{ij},
\end{equation*}
where $\operatorname{Ric}$ denotes the Ricci curvature of $g(t)$. In two dimensions, this equation simplifies substantially because the Ricci tensor is proportional to the metric:
\begin{equation*}
\operatorname{Ric} = \frac{1}{2} R g,
\end{equation*}
where $R$ is the scalar curvature. Hence, the two-dimensional Ricci flow reduces to a scalar conformal evolution,
\begin{equation*}
\frac{\partial g}{\partial t} = - R g.
\end{equation*}
This structure implies that the flow preserves the conformal class of the metric, and the evolution of the conformal factor $u$ in $g(t) = e^{2u(t)} g_0$ satisfies the nonlinear heat-type equation
\begin{equation*}
\frac{\partial u}{\partial t} = \Delta_{g_0} u - K_{g_0},
\end{equation*}
where $K_{g_0}$ is the Gaussian curvature of the initial metric. In two dimensions, the Ricci flow acts as a curvature-normalizing process, tending to smooth out irregularities in the metric and evolve it toward one of constant curvature, depending on the topology of the underlying surface (positive, zero, or negative Euler characteristic). The flow is well-posed for all smooth initial metrics on compact surfaces and can be extended through singularities using appropriate rescaling techniques. This property makes the two-dimensional Ricci flow an essential analytical tool in geometric analysis, Teichm\"uller theory, and spectral geometry, especially when studying how geometric or spectral quantities evolve with time (see \cite{c1, h1}).\\
Let $\Gamma_t \subset (M, g(t))$ be a smooth one-parameter family of closed embedded curves on a Riemannian surface. The curve shortening flow (CSF) is defined by the geometric evolution equation
\begin{equation*}
\frac{\partial \gamma}{\partial t} = -\kappa \nu,
\end{equation*}
where $\gamma(s,t)$ is a smooth parametrization of $\Gamma_t$, $\kappa$ denotes the geodesic curvature of the evolving curve with respect to the ambient metric $g(t)$, and $\nu$ is the outward unit normal vector field along $\Gamma_t$. Intuitively, each point of the curve moves in the direction of its normal vector with speed equal to its geodesic curvature. The flow is the natural gradient flow of the length functional
\[
L[\Gamma] = \int_{\Gamma} ds,
\]
and hence monotonically decreases the total length of the curve:
\[
\frac{dL}{dt} = -\int_{\Gamma_t} \kappa^2 \, ds \le 0.
\]
Under the curve shortening flow, embedded curves remain embedded for all times up to the formation of singularities, and convex curves shrink smoothly to a round point in finite time. When the ambient metric evolves simultaneously under the Ricci flow, the combined system exhibits rich coupled geometric behavior, with the curvature of both the surface and the boundary curves interacting dynamically. On fixed or evolving Riemannian surfaces, the CSF provides a powerful tool for analyzing geometric and spectral quantities associated with evolving domains (for more details see \cite{g1,g2}).

\subsection{Background and Related Works}
The study of spectral properties of the Laplace--Beltrami operator on compact Riemannian surfaces and their subdomains forms a central theme in geometric analysis, linking curvature, topology, and eigenvalue behavior. Given a compact Riemannian surface $(M, g)$ (possibly with boundary), the eigenvalue problem
\begin{equation*}
-\Delta_g u = \lambda u,
\end{equation*}
with appropriate boundary conditions (Dirichlet, Neumann, or mixed) admits a discrete spectrum
\[
0 = \lambda_0 < \lambda_1 \le \lambda_2 \le \cdots \to \infty,
\]
where each eigenvalue $\lambda_k$ encodes geometric and topological information about $(M,g)$ (see \cite{c2}). Classical results show that these spectral quantities depend intricately on geometric invariants such as area, curvature, diameter, and boundary length. In the case of surfaces with boundary, geometric inequalities and comparison results (e.g., \cite{k1,y1}) relate $\lambda_1$ to the underlying conformal structure. For embedded annular regions, additional conformal parameters, such as the modulus of the annulus, play a fundamental role in determining the spectral behavior, since the Laplacian and its eigenfunctions are conformally covariant in two dimensions. The annulus thus serves as a natural model for studying how the interplay between geometry and topology influences spectral invariants.

When both the ambient metric and the boundary evolve, new analytical challenges arise. In particular, one may study how the eigenvalues of the Laplace--Beltrami operator on an embedded annulus $A_t \subset (M, g(t))$ evolve when the Riemannian metric $g(t)$ satisfies the two-dimensional Ricci flow and the boundary components $\partial A_t$ evolve under the curve shortening flow. Recently, Sobnack and Topping \cite{s1} established that for two disjoint closed embedded curves evolving by curve shortening flow on a surface, the conformal modulus of the annular region between them is monotonically increasing in time. This monotonicity of modulus under the flow provides a new conformally invariant quantity that can be used to control geometric and analytic behavior of evolving annuli. In particular, as the modulus increases, the annulus becomes "thinner'' in the conformal sense, which has direct implications for the variation of spectral quantities such as Dirichlet or Neumann eigenvalues. The combination of Ricci flow in the ambient metric and curve shortening flow on the boundary thus offers a rich framework for studying the coupled evolution of geometry and spectrum on dynamically changing domains.\\
In the past decade and especially in the last few years there has been a marked resurgence of interest in spectral problems for bounded regions of Riemannian manifolds, driven both by advances in geometric analysis and by renewed focus on boundary spectral problems such as the Steklov problem.  Survey and overview articles summarize many of the new directions: sharp upper and lower bounds for Steklov eigenvalues, asymptotics and counting function refinements, and connections to free boundary minimal surfaces have all seen significant progress.  Such advances place the study of eigenvalues on bounded domains in a broader program that links extremal eigenvalue problems to geometric PDE and variational constructions (see, e.g., recent surveys on Steklov problems and spectral inequalities).  
A second strand of recent work has focused on domains with holes (doubly connected regions, annuli) and how topology and conformal invariants influence spectral quantities.  For annular domains, classical results are being complemented by sharp bounds and rigidity phenomena for both Laplace and Steklov spectra: researchers have obtained explicit formulae and comparison results for symmetric annuli, studied monotonicity and extremal problems for eigenvalues as the inner hole moves, and explored the rigidity of eigenfunctions under boundary constraints.  These works show that conformal parameters of the annulus (notably the modulus) naturally enter eigenvalue estimates in two dimensions, because of the conformal covariance of the Laplacian and the special role of conformal mappings in controlling Rayleigh quotients on multiply connected domains.
Most pertinent to the present paper are very recent results that connect geometric flows of the boundary to conformal invariants of the enclosed region.  Sobnack and Topping \cite{s1} proved that when two nested embedded curves evolve simultaneously under the curve shortening flow the conformal modulus of the annulus between them is monotone (in fact increasing) in time; they further give an analogue on general ambient surfaces under a lower curvature bound.  This monotonicity provides a robust, flow-invariant control on the conformal geometry of evolving annuli and therefore supplies a natural bridge to spectral questions: an increasing modulus constrains the conformal class and hence has direct consequences for conformally-sensitive spectral quantities (e.g. Rayleigh quotients and the behavior of the Dirichlet/Neumann/Steklov eigenvalues) as the domain evolves.
Complementing this line, several recent preprints and articles have advanced eigenvalue bounds for domains with holes, provided explicit computations in symmetric annuli, and improved higher-order Steklov estimates that are sensitive to the presence of multiple boundary components.  Taken together these results suggest concrete strategies for studying the coupled Ricci–CSF evolution of an embedded annulus: use modulus monotonicity to control the conformal geometry of the annulus and then combine this control with evolving geometric quantities coming from the Ricci flow to derive monotonicity, comparison or asymptotic statements for the Laplace spectrum (under Dirichlet, Neumann, or Steklov conditions) on the evolving region.  We adopt this viewpoint in the sequel, relating evolving modulus bounds to spectral inequalities and deriving first results for the time-derivative of eigenvalues under the coupled flows (e.g, see more details in \cite{b1,d1,e1}).\\
Throughout the paper, we work under the following standing assumptions.
\begin{itemize}
\item $\left(M, g\left(t\right)\right)$ is a smooth compact Riemannian surface, with $g\left(t\right)$ evolving by the two-dimensional Ricci flow $\partial_{t}g = -2Kg$.
\item $A\left(t\right) \subset M$ is a smoothly embedded annulus whose boundary components $\Gamma_{0}\left(t\right), \Gamma_{1}\left(t\right)$ evolve by curve shortening flow.
\item The annulus admits a tubular neighborhood of fixed width $\rho_{0} >0$, uniformly in time, with uniformly bounded curvature and injectivity radius.
\item The Gaussian curvature satisfies $|K|_{L^{\infty}\left(A\left(t\right)\right)} \leq K_{0}$ for all $t$.
\item The harmonic capacity potential $u\left(t\right)$ satisfies the nondegeneracy condition
\begin{align*}
m:= \inf_{\partial A\left(t\right)} \partial_{\nu}u\left(t\right) >0.
\end{align*}
\end{itemize}
All constants appearing in the estimates depend only on $\left(\rho_{0}, K_{0}\right)$ and standard elliptic and Sobolev constants associated with $A\left(t\right)$, unless stated otherwise.

\section{Variational Formulas}
For a smooth family of regions $\Omega_t \subset M$ with boundary velocity field $V$, it asserts that for any sufficiently smooth function $F$,
\begin{equation*}
\frac{d}{dt} \int_{\Omega_t} F \, d\mu_g = \int_{\Omega_t} \frac{\partial F}{\partial t} \, d\mu_g + \int_{\partial \Omega_t} F\, (V \cdot \nu)\, d\sigma_g,
\end{equation*}
where $\nu$ is the outward unit normal and $d\sigma_g$ is the boundary measure induced by $g(t)$.  
This theorem underlies many variational identities in evolving geometry, allowing one to compute time derivatives of energy, area, or spectral functionals when both the metric and the domain evolve (e.g., see \cite{g3}). This formula is called "Reynolds transport theorem" and has been widely used in this section to find and compute variational formulas.\\
We recall that $\lambda$ is called an eigenvalue of Laplacian associated to eigenfunction $\phi$ with Dirichlet boundary condition when $-\Delta_{g\left(t\right)}\phi = \lambda \phi$ everywhere in the bounded region and $\phi$ vanished on the boundary. One may assume also the normalization i.e., for a bounded region $A$ then
\begin{align*}
\int_{A}\phi^{2} d\mu_{g} = 1.
\end{align*}
To express $\lambda$ as an integral, since $\int |\nabla \phi |^{2} = \int \phi\left(-\Delta \phi\right) = \lambda \int \phi^{2} = \lambda$, then
\begin{align*}
\lambda\left(t\right) = \int_{A\left(t\right)}|\nabla \phi |^{2} d\mu_{g}.
\end{align*}
\begin{lemma}\label{lemyek}
Let $\left(M, g\left(t\right)\right)$ be a smooth one--parameter family of Riemannian surfaces and let $A\left(t\right) \subset M$ be a smooth family of bounded domains diffeomorphic to an annulus with smooth boundary $\partial A\left(t\right) = \Gamma\left(t\right)$. If for each $t$, $\lambda\left(t\right)$ denotes Laplace's eigenvalue and $\phi\left(t\right)$ denotes its corresponding eigenfunction to be chosen satisfy Dirichlet boundary condition and normalization, then
\begin{align*}
\frac{d}{dt}\lambda = -\int_{\partial A}V\left(\partial_{\nu}\phi\right)^{2} d\sigma_{g} + \int_{A}\lbrace \frac{1}{2}\operatorname{tr}_{g}\left(\partial_{t}g\right)\left(|\nabla \phi |^{2} - \lambda \phi^{2}\right) - \partial_{t}g\langle \nabla \phi, \nabla \phi \rangle \rbrace d\mu_{g},
\end{align*}
where $V$ is the normal speed of the boundary in the outward normal direction $\nu$ and here $\partial_{\nu}\phi := \langle \nabla \phi, \nu \rangle_{g\left(t\right)}$.
\end{lemma}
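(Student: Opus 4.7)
The plan is to differentiate the identity $\lambda(t)=\int_{A(t)}|\nabla\phi|^{2}\,d\mu_{g(t)}$ recorded just above the lemma, using Reynolds transport theorem. This produces three families of contributions: one from the evolving measure $d\mu_{g(t)}$, one from the pointwise time derivative of the integrand $|\nabla\phi|^{2}$, and a boundary piece weighted by the outward normal velocity $V$; the task is to massage these into the shape stated. Throughout, I view the simple Dirichlet eigenpair $(\lambda(t),\phi(t))$ as an analytic one-parameter family (legitimate after pulling back to a fixed reference annulus by a time-dependent diffeomorphism), so all pointwise time derivatives are justified by the Kato--Rellich framework already invoked in the setup.

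First I would compute the pointwise derivative of the integrand. From $|\nabla\phi|^{2}=g^{ij}\phi_{i}\phi_{j}$ together with $\partial_{t}g^{ij}=-g^{ik}g^{jl}\,\partial_{t}g_{kl}$, one gets
\[
\partial_{t}|\nabla\phi|^{2}=-(\partial_{t}g)(\nabla\phi,\nabla\phi)+2\langle\nabla(\partial_{t}\phi),\nabla\phi\rangle.
\]
Combining this with $\partial_{t}\,d\mu_{g}=\frac{1}{2}\operatorname{tr}_{g}(\partial_{t}g)\,d\mu_{g}$ and Reynolds leaves a cross term involving $\partial_{t}\phi$, which I would remove by integration by parts using $-\Delta_{g}\phi=\lambda\phi$:
\[
2\int_{A}\langle\nabla(\partial_{t}\phi),\nabla\phi\rangle\,d\mu_{g}=2\lambda\int_{A}\phi\,\partial_{t}\phi\,d\mu_{g}+2\int_{\partial A}(\partial_{\nu}\phi)(\partial_{t}\phi)\,d\sigma_{g}.
\]

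The two remaining pieces involving $\partial_{t}\phi$ are then handled separately. The interior cross term $2\lambda\int_{A}\phi\,\partial_{t}\phi\,d\mu_{g}$ is eliminated by differentiating the normalization $\int_{A}\phi^{2}\,d\mu_{g}=1$ in time: since $\phi=0$ on $\partial A$, Reynolds gives $2\int_{A}\phi\,\partial_{t}\phi\,d\mu_{g}=-\int_{A}\frac{1}{2}\operatorname{tr}_{g}(\partial_{t}g)\phi^{2}\,d\mu_{g}$, which produces exactly the $-\lambda\phi^{2}$ contribution in the stated formula. The boundary term uses the material derivative identity $\partial_{t}\phi=-V\,\partial_{\nu}\phi$ on $\partial A(t)$, obtained by differentiating $\phi(t,x(t))=0$ along a boundary point moving with velocity $V\nu$; coupled with the observation that $|\nabla\phi|^{2}=(\partial_{\nu}\phi)^{2}$ on $\partial A$ (the tangential derivative vanishes there), the two boundary contributions $-2\int_{\partial A}V(\partial_{\nu}\phi)^{2}\,d\sigma_{g}$ and $+\int_{\partial A}V\,|\nabla\phi|^{2}\,d\sigma_{g}$ collapse to the single term $-\int_{\partial A}V\,(\partial_{\nu}\phi)^{2}\,d\sigma_{g}$ displayed in the lemma.

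The main obstacle, modest but genuine, is the material derivative step on the moving boundary: one needs enough temporal and up-to-the-boundary regularity of $\phi(t)$ to interpret $\partial_{t}\phi$ and $\partial_{\nu}\phi$ as honest traces on $\partial A(t)$ and to legitimise the chain rule $\partial_{t}\phi+V\,\partial_{\nu}\phi=0$. This is precisely where the standing assumptions bite: the uniform tubular-neighbourhood width $\rho_{0}$, the curvature bound $|K|\le K_{0}$, and the nondegeneracy $\partial_{\nu}u\ge m>0$ together provide uniform elliptic boundary regularity, so that the Kato--Rellich analytic family on the fixed reference annulus transfers to a smoothly varying eigenpair on the moving $A(t)$ with the traces demanded above. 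Once this is in place, collecting the four pieces reproduces the stated identity without further surgery.
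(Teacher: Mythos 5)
Your proposal is correct and follows essentially the same route as the paper: differentiate $\lambda(t)=\int_{A(t)}|\nabla\phi|^{2}\,d\mu_{g}$ via Reynolds, expand $\partial_{t}|\nabla\phi|^{2}$, integrate by parts against $-\Delta\phi=\lambda\phi$, eliminate the interior cross term with the differentiated normalization, and handle the boundary piece through $\partial_{t}\phi|_{\partial A}=-V\partial_{\nu}\phi$ together with $|\nabla\phi|^{2}|_{\partial A}=(\partial_{\nu}\phi)^{2}$. The only addition you make is the brief discussion of where the Kato--Rellich regularity is actually used, which is a reasonable gloss but not a different argument.
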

\begin{proof}
Since 
\begin{align*}
\frac{d}{dt}\lambda = \frac{d}{dt}\int_{A\left(t\right)}|\nabla \phi |^{2} d\mu_{g},
\end{align*}
the problem becomes direct computation of the right--hand side. It is not hard to see that
\begin{align*}
\partial_{t}\left(|\nabla \phi |^{2}\right) = \partial_{t}\left(g^{ij}\partial_{i}\phi \partial_{j}\phi \right) = \left(\partial_{t} g^{ij}\right)\partial_{i}\phi \partial_{j}\phi + 2g^{ij}\partial_{i}\phi \partial_{j}\left(\partial_{t}\phi\right).
\end{align*}
Using $\partial_{t}g^{ij} = -g^{ik}g^{jl}\partial_{t}g_{kl}$, we may write
\begin{align*}
\left(\partial_{t}g^{ij}\right)\partial_{i}\phi \partial_{j}\phi = -\partial_{t}g\langle \nabla \phi, \nabla \phi \rangle,
\end{align*}
where we adopt the short--hand $\partial_{t}g\langle X, X\rangle = \partial_{t}g_{ij}X^{i}X^{j}$ for vector field $X$. So
\begin{align*}
\partial_{t}\left(|\nabla \phi |^{2}\right) = -\partial_{t}g\langle \nabla \phi, \nabla \phi \rangle + 2\langle \nabla \phi, \nabla\left(\partial_{t}\phi\right)\rangle.
\end{align*}
For easy--writing, since now, we suppress the explicit dependence on $t$ and write integrals over $A = A\left(t\right)$, $\partial A = \partial A\left(t\right)$. By applying Reynolds transport theorem to $|\nabla \phi |^{2}$, we get
\begin{align*}
\frac{d}{dt} \lambda &= \int_{A}\left(\partial_{t}\left(|\nabla \phi |^{2}\right) + \frac{1}{2}|\nabla \phi |^{2} \operatorname{tr}_{g}\left(\partial_{t}g\right)\right)d\mu_{g} + \int_{\partial A}|\nabla \phi |^{2}V d\sigma_{g} \\
&= \int_{A}\left(-\partial_{t}g\langle \nabla \phi, \nabla \phi \rangle +2\langle \nabla \phi, \nabla\left(\partial_{t}\phi\right)\rangle + \frac{1}{2}\operatorname{tr}_{g}\left(\partial_{t}g\right)|\nabla \phi |^{2}\right) d\mu_{g}\\
&+ \int_{\partial A}|\nabla \phi |^{2} V d\sigma_{g}.
\end{align*}
Use integration by parts (Green's identity) on the term $2\int_{A}\langle \nabla \phi, \nabla\left(\partial_{t}\phi\right)\rangle d\mu_{g}$, we see
\begin{align*}
2\int_{A}\langle \nabla \phi, \nabla \left(\partial_{t}\phi\right)\rangle d\mu_{g} = -2\int_{A}\left(\partial_{t}\phi\right)\Delta \phi d\mu_{g} + 2\int_{\partial A}\left(\partial_{t}\phi\right)\partial_{\nu}\phi d\sigma_{g},
\end{align*}
where $\partial_{\nu}\phi$ denotes the outward normal derivative on $\partial A$. Since $\phi$ satisfies the Laplace equation, thus
\begin{align*}
-2\int_{A}\left(\partial_{t}\phi\right) \Delta \phi d\mu_{g} = 2\lambda \int_{A}\phi \partial_{t}\phi d\mu_{g}.
\end{align*}
The eigenfunction $\phi$ satisfies the Dirichlet condition $\phi |_{\partial A\left(t\right)} \equiv 0$ for every $t$. Differentiate this identity following a boundary point $x\left(t\right) \in \partial A\left(t\right)$, we have
\begin{align*}
0 = \frac{d}{dt}\phi \left(x\left(t\right), t\right) = \partial_{t}\phi\left(x\left(t\right), t\right) + \langle \nabla \phi\left(x\left(t\right), t\right), x_{t}\rangle.
\end{align*}
But $x_{t} = V\nu$ and $\langle \nabla \phi, \nu\rangle = \partial_{\nu}\phi$, so
\begin{align*}
\partial_{t}|_{\partial A} = -V\partial_{\nu}\phi.
\end{align*}
(There is no tangential contribution because $\phi$ is fixed zero along the boundary). Thus the boundary term is
\begin{align*}
2\int_{\partial A}\left(\partial_{t}\phi\right) \partial_{\nu}\phi d\sigma_{g} = 2\int_{\partial A}\left(-V\partial_{\nu}\phi\right)\partial_{\nu}\phi d\sigma_{g} = -2\int_{\partial A}V\left(\partial_{\nu}\phi\right)^{2}d\sigma_{g}.
\end{align*}
We recall that the boundary term $\int_{\partial A}|\nabla \phi |^{2} V d\sigma_{g}$ is appeared from the Reynolds transport theorem, but $\phi |_{\partial A} = 0$ so tangential derivatives vanish, hence 
\begin{align*}
|\nabla \phi |^{2}|_{\partial A} = \left(\partial_{\nu}\phi\right)^{2}.
\end{align*}
Therefore, total boundary contribution is
\begin{align*}
-2\int_{\partial A}V\left(\partial_{\nu}\phi\right)^{2} d\sigma_{g} + \int_{\partial A}\left(\partial_{\nu}\phi\right)^{2}Vd\sigma_{g} = -\int_{\partial A}V\left(\partial_{\nu}\phi\right)^{2}d\sigma_{g}.
\end{align*}
By applying Reynolds transport theorem on normalization equation, we see
\begin{align*}
0 = \frac{d}{dt}\int_{A}\phi^{2} d\mu_{g} = \int_{A}\left(2\phi \partial_{t}\phi + \frac{1}{2}\phi^{2}\operatorname{tr}_{g}\left(\partial_{t}g\right)\right)d\mu_{g} + \int_{\partial A}\phi^{2}V d\sigma_{g}.
\end{align*}
Similarly, because of the boundary condition, the boundary term vanishes. Thus
\begin{align}\label{yek}
\int_{A}\phi \partial_{t}\phi d\mu_{g} = -\frac{1}{4}\int_{A}\phi^{2}\operatorname{tr}_{g}\left(\partial_{t}g\right)d\mu_{g}.
\end{align}
This will help to eliminate $\int \phi \partial_{t}\phi$. Now, Gathering the pieces from above, we see
\begin{align*}
\frac{d}{dt}\lambda &= - \int_{\partial A}V\left(\partial_{\nu}\phi\right)^{2} d\sigma_{g} + \int_{A}\left(-\partial_{t}g\langle \nabla \phi, \nabla \phi\rangle + 2\langle \nabla \phi, \nabla\left(\partial_{t}\phi\right)\rangle + \frac{1}{2}\operatorname{tr}_{g}\left(\partial_{t}g\right)|\nabla \phi |^{2}\right) d\mu_{g}\\
&= -\int_{\partial A}V\left(\partial_{\nu}\phi\right)^{2} d\sigma_{g} + \int_{A}\left(-\partial_{t}g\langle \nabla \phi, \nabla \phi \rangle + 2\lambda \phi \partial_{t}\phi + \frac{1}{2}\operatorname{tr}_{g}\left(\partial_{t}g\right)|\nabla \phi |^{2}\right) d\mu_{g}.
\end{align*}
From (\ref{yek}), we have
\begin{align*}
2\lambda\int_{A}\phi \partial_{t}\phi d\mu_{g} = 2\lambda\left(-\frac{1}{4}\int_{A}\phi^{2}\operatorname{tr}_{g}\left(\partial_{t}g\right)d\mu_{g}\right)
=\frac{\lambda}{2}\int_{A}\phi^{2}\operatorname{tr}_{g}\left(\partial_{t}g\right)d\mu_{g}.
\end{align*}
Thus, the interior integrand becomes
\begin{align*}
\frac{1}{2}\operatorname{tr}_{g}\left(\partial_{t}g\right)\left(|\nabla \phi |^{2} - \lambda \phi^{2}\right) - \partial_{t}g\langle \nabla \phi, \nabla \phi \rangle.
\end{align*}
This with the boundary integrand is exactly what we were looking for.
\end{proof}
\begin{remark}[{\bf Special Cases}]
\begin{itemize}
\item Consider fixed metric $\left(\partial_{t}g \equiv 0\right)$. Then the formula reduces to the classical Hadamard formula
\begin{align*}
\frac{d}{dt}\lambda = -\int_{\partial A}V\left(\partial_{\nu}\phi\right)^{2} d\sigma_{g},
\end{align*}
which is standard, i.e., inward motion $\left(V < 0\right)$ increases $\lambda$.
\item Consider fixed domain $\left(V \equiv 0\right)$, thus metric varying only. Then
\begin{align*}
\frac{d}{dt}\lambda = \int_{A}\left(-\partial_{t}g\langle \nabla \phi, \nabla \phi\rangle + \frac{1}{2}\operatorname{tr}_{g}\left(\partial_{t}g\right)|\nabla \phi |^{2} - \frac{\lambda}{2}\operatorname{tr}_{g}\left(\partial_{t}g\right)\phi^{2}\right)d\mu_{g}.
\end{align*}
\item Conformal variation $\partial_{t}g = 2\dot{f}g$ (infinitesimal conformal factor $2\dot{f}$) in dimension $n$. Computing $\partial_{t}g\langle \nabla \phi, \nabla \phi \rangle = 2\dot{f}|\nabla \phi |^{2}$, and $\operatorname{tr}_{g}\left(\partial_{t}g\right) = 2n\dot{f}$. Plugging into the interior integrand yeilds
\begin{align*}
-2\dot{f}|\nabla \phi |^{2} + n\dot{f}|\nabla \phi |^{2} - n\dot{f}\lambda \phi^{2} = \left(n-2\right)\dot{f}|\nabla \phi |^{2} - n\dot{f}\lambda \phi^{2}.
\end{align*}
In dimension $n=2$, this simplifies to $-2\dot{f}\lambda \phi^{2}$ integrated, hence
\begin{align*}
\frac{d}{dt}\lambda = -2\lambda\int_{A}\dot{f}\phi^{2} d\mu_{g},
\end{align*}
and if $\dot{f}$ is constant this gives the expected scaling law $\lambda$ scales by factor $e^{-2f}$.
\item Consider Ricci flow in $2$D, $\partial_{t}g = -2Kg$ where $K$ is the Gaussian curvature. Then $\partial_{t}g\langle \nabla \phi, \nabla \phi \rangle = -2K|\nabla \phi |^{2}$ and since $n=2$, thus $\operatorname{tr}_{g}\left(\partial_{t}g\right) = -4K$. Then we have
\begin{align*}
&-\partial_{t}g\langle \nabla \phi, \nabla \phi\rangle + \frac{1}{2}\operatorname{tr}_{g}\left(\partial_{t}g\right)|\nabla \phi |^{2} - \frac{\lambda}{2}\operatorname{tr}_{g}\left(\partial_{t}g\right)\phi^{2}\\
&= -\left(-2K|\nabla \phi |^{2}\right) + \frac{1}{2}\left(-4K\right)|\nabla \phi |^{2} - \frac{\lambda}{2}\left(-4K\right)\phi^{2}\\
&= 2K|\nabla \phi |^{2} - 2K|\nabla \phi |^{2} + 2\lambda K \phi^{2} = 2\lambda K \phi^{2}.
\end{align*}
Hence, under curve shortening flow (CSF) and Ricci flow the formula becomes
\begin{align*}
\frac{d}{dt}\lambda = -\int_{\partial A}V\left(\partial_{\nu}\phi\right)^{2} d\sigma_{g} + 2\lambda\int_{A}K\phi^{2}d\mu_{g}.
\end{align*}
\end{itemize}
\end{remark}
\begin{remark}
The differentiability of $\lambda(t)$ and $\phi(t)$ follows from classical analytic perturbation theory for simple eigenvalues (Kato--Rellich theory), together with smooth dependence of the domain and metric. The normalization $\int_{A(t)}\phi^2\,d\mu_{g(t)}=1$ ensures uniqueness of the eigenfunction up to sign.
\end{remark}
The infinitesimal conformal factor means: if the metric varies in time by a conformal scaling $g\left(t\right) = e^{2f\left(t\right)}g_{0}$, then $\partial_{t}g = 2\dot{f}g$. The scalar function $\dot{f}\left(x\right)$ is the "infinitesimal conformal factor". It measures how fast the metric is being stretched uniformly in all directions at each point. 
\begin{lemma}\label{lemdo}
Consider $\left(M,g\right)$ as a smooth $2$-dimensional Riemannian manifold and let for each $t$, $A\left(t\right) \subset M$ be a smooth domain diffeomorphic to an annulus bounded by $\partial A\left(t\right) = \Gamma_{0}\left(t\right) \cup \Gamma_{1}\left(t\right)$. Also let $\nu$ denote the outward unit normal of $A\left(t\right)$ along the $\partial A\left(t\right)$; the boundary moves with normal velocity $V$ relative to $\nu$, i.e., a boundary point follows $x_{t} = V\nu$ (so if the boundary moves inward then $V<0$). If $u$ and $E$ denote the harmonic capacity potential on $A\left(t\right)$ and capacity energy respectively, then under the same Dirichlet assumptions as Lemma \ref{lemyek}, we have
\begin{align*}
\frac{d}{dt}E = -\frac{1}{2}\int_{A}\left(|\operatorname{Hess}u|^{2} + K|\nabla u|^{2} + \partial_{t}g\left( \nabla u, \nabla u \right) - \frac{1}{2}|\nabla u|^{2} \operatorname{tr}_{g}\left(\partial_{t}g\right)\right) d\mu_{g},
\end{align*}
where $K$ is the Gaussian curvature and $\operatorname{tr}_{g}\left(\partial_{t}g\right) = g^{ij}\partial_{t}g_{ij}$.
\end{lemma}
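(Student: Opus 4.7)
The plan is to repeat the time--derivative computation of Lemma \ref{lemyek}, but with the harmonic capacity potential $u$ in place of the eigenfunction, and then to convert the residual boundary contribution into an interior Hessian/curvature integral via a Bochner--Weitzenb\"ock identity. First I apply the Reynolds transport theorem to $E(t) = \tfrac{1}{2}\int_{A(t)}|\nabla u|^{2}\,d\mu_{g}$, obtaining
\begin{align*}
\frac{dE}{dt} = \frac{1}{2}\int_{A}\partial_{t}(|\nabla u|^{2})\,d\mu_{g} + \frac{1}{4}\int_{A}|\nabla u|^{2}\operatorname{tr}_{g}(\partial_{t}g)\,d\mu_{g} + \frac{1}{2}\int_{\partial A}V|\nabla u|^{2}\,d\sigma_{g}.
\end{align*}
Expanding $\partial_{t}(|\nabla u|^{2}) = -\partial_{t}g(\nabla u,\nabla u) + 2\langle \nabla u,\nabla\partial_{t}u\rangle$ as in the previous lemma and using Green's identity on the last term, the bulk integral $\int_{A}(\partial_{t}u)\Delta u\,d\mu_{g}$ vanishes by harmonicity of $u$. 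Differentiating the time--independent Dirichlet data $u|_{\Gamma_{0}}=0$ and $u|_{\Gamma_{1}}=1$ along moving boundary points yields $\partial_{t}u|_{\partial A} = -V\partial_{\nu}u$, while $u|_{\partial A}$ being locally constant gives $|\nabla u|^{2}|_{\partial A} = (\partial_{\nu}u)^{2}$. Collecting these pieces produces the preliminary identity
\begin{align*}
\frac{dE}{dt} = -\frac{1}{2}\int_{A}\Bigl(\partial_{t}g(\nabla u,\nabla u) - \tfrac{1}{2}|\nabla u|^{2}\operatorname{tr}_{g}(\partial_{t}g)\Bigr)d\mu_{g} - \frac{1}{2}\int_{\partial A}V(\partial_{\nu}u)^{2}\,d\sigma_{g}.
\end{align*}

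The remaining boundary term must be converted into interior form via the two--dimensional Bochner identity. Since $u$ is harmonic and $\operatorname{Ric}_{g} = K g$ on a surface, one has $\tfrac{1}{2}\Delta|\nabla u|^{2} = |\operatorname{Hess}u|^{2} + K|\nabla u|^{2}$; integrating over $A$ and applying the divergence theorem yields
\begin{align*}
\int_{A}\bigl(|\operatorname{Hess}u|^{2} + K|\nabla u|^{2}\bigr)d\mu_{g} = \frac{1}{2}\int_{\partial A}\partial_{\nu}|\nabla u|^{2}\,d\sigma_{g}.
\end{align*}
To identify the right--hand side with $\int_{\partial A}V(\partial_{\nu}u)^{2}\,d\sigma_{g}$, I work in an orthonormal frame $\{\nu,\tau\}$ along $\partial A$. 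Because $u$ is constant on each boundary component, $\tau(u)=0$, so $u_{\tau\tau} = -(\nabla_{\tau}\tau)(u) = -\kappa_{g}\partial_{\nu}u$ with $\kappa_{g} = \langle\nabla_{\tau}\tau,\nu\rangle$. Harmonicity $u_{\nu\nu}+u_{\tau\tau}=0$ then forces $u_{\nu\nu}= \kappa_{g}\partial_{\nu}u$, whence $\partial_{\nu}|\nabla u|^{2} = 2(\partial_{\nu}u)u_{\nu\nu} = 2\kappa_{g}(\partial_{\nu}u)^{2}$. The CSF prescription $x_{t}=V\nu=\kappa\nu$ identifies $V = \kappa_{g}$, so that $\int_{\partial A}V(\partial_{\nu}u)^{2}\,d\sigma_{g} = \int_{A}(|\operatorname{Hess}u|^{2}+K|\nabla u|^{2})\,d\mu_{g}$.

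Substituting this identity into the preliminary expression absorbs the boundary integral into the desired interior integrand; collecting all terms under a single minus sign yields the formula stated in the lemma. The main obstacle I anticipate is the sign and orientation bookkeeping: $\kappa_{g}$ must be interpreted consistently with the outward--normal convention on both components $\Gamma_{0}$ and $\Gamma_{1}$, and the CSF identification $V = \kappa_{g}$ must hold with the \emph{same} sign convention on each component so that the two boundary integrals combine cleanly without spurious sign flips. Once these conventions are fixed, the remaining steps are routine variational and integration--by--parts computations parallel to those of Lemma \ref{lemyek}.
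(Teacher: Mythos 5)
Your argument is correct and follows essentially the same route as the paper: Reynolds transport plus Green's identity and harmonicity give the preliminary formula carrying the boundary term $-\tfrac12\int_{\partial A}V(\partial_\nu u)^2\,d\sigma_g$, and the two--dimensional Bochner identity together with the boundary relation $\operatorname{Hess}u(\nu,\nu)=\kappa_g\,\partial_\nu u$ (the paper writes this as $-\kappa\,\partial_\nu u$ in its opposite curvature convention) converts that boundary integral into $\int_A(|\operatorname{Hess}u|^2+K|\nabla u|^2)\,d\mu_g$. Your sign worry resolves cleanly: with $\kappa_g=\langle\nabla_\tau\tau,\nu\rangle$ taken with respect to the outward normal on both $\Gamma_0$ and $\Gamma_1$, CSF gives $V=\kappa_g$ uniformly on $\partial A$, which coincides with the paper's $V=-\kappa$, so the two boundary components combine without any spurious flip.
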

\begin{proof}
We recall that
\begin{align*}
E\left(t\right) = \frac{1}{2}\int_{A}|\nabla u|^{2} d\mu_{g}.
\end{align*}
Apply Reynolds transport theorem to $F = \frac{1}{2}|\nabla u|^{2}$. To compute $\partial_{t}\left(\frac{1}{2}|\nabla u|^{2}\right)$, we differentiate the pointwise quantity
\begin{align*}
\frac{1}{2}|\nabla u|^{2} = \frac{1}{2}g^{ij}\partial_{i}u\partial_{j}u.
\end{align*}
Differentiate (holding coordinate fixed)
\begin{align*}
\partial_{t}\left(\frac{1}{2}|\nabla u|^{2}\right) = \frac{1}{2}\left(\partial_{t}g^{ij}\right)\partial_{i}u\partial_{j}u + g^{ij}\partial_{i}u\partial_{j}\left(\partial_{t}u\right).
\end{align*}
Use $\partial_{t}g^{ij} = -g^{ik}g^{jl}\partial_{t}g_{kl}$ to write the first term as $-\frac{1}{2}\partial_{t}g\left( \nabla u, \nabla u\right)$. Thus
\begin{align*}
\partial_{t}\left(\frac{1}{2}|\nabla u|^{2}\right) = -\frac{1}{2}\partial_{t}g\left( \nabla u, \nabla u\right) + \langle \nabla u, \nabla\left(\partial_{t}u\right)\rangle.
\end{align*}
By plugging into Reynolds theorem, we get
\begin{align}\label{do}
\frac{dE}{dt} = -\frac{1}{2}\int_{A}\left(\partial_{t}g\left( \nabla u, \nabla u\right) + \langle \nabla u, \nabla\left(\partial_{t}u\right)\rangle - \frac{1}{2}|\nabla u|^{2}\operatorname{tr}_{g}\left(\partial_{t}g\right)\right)d\mu_{g} + \frac{1}{2}\int_{\partial A}|\nabla u|^{2}Vd\sigma_{g}.
\end{align}
Apply Green's identity (integrate by parts) to $\langle \nabla u, \nabla\left(\partial_{t}u\right)\rangle$, we see
\begin{align*}
\int_{A}\langle \nabla u, \nabla\left(\partial_{t}u\right)\rangle d\mu_{g} = -\int_{A}\left(\partial_{t}u\right)\Delta_{g}u d\mu_{g} + \int_{\partial A}\left(\partial_{t}u\right)\partial_{\nu}u d\sigma_{g}.
\end{align*}
From harmonicity ($\Delta_{g}u = 0, \,\, \text{in} A\left(t\right)$), the volume term vanishes and we are left with only the boundary term
\begin{align}\label{se}
\int_{A}\langle \nabla u, \nabla\left(\partial_{t}u\right)\rangle d\mu_{g} = \int_{\partial A}\left(\partial_{t}u\right)\partial_{\nu}u d\sigma_{g}.
\end{align}
Since $u$ has constant Dirichlet values on each components of the boundary, differentiating the identity $u\left(x\left(t\right), t\right) = \text{const}$ along a boundary point $x\left(t\right)$ gives
\begin{align*}
0 = \frac{d}{dt}u\left(x\left(t\right), t\right) = \partial_{t}u + \langle \nabla u, x_{t}\rangle = \partial_{t}u + V\partial_{\nu}u.
\end{align*}
Thus on $\partial A$,
\begin{align}\label{char}
\partial_{t}u|_{\partial A} = -V\partial_{\nu}u.
\end{align}
Substitute (\ref{char}) in (\ref{se}), we get
\begin{align}\label{pang}
\int_{A}\langle \nabla u, \nabla\left(\partial_{t}u\right)\rangle d\mu_{g} = \int_{\partial A}\left(-V\partial_{\nu}u\right)\partial_{\nu}u d\sigma_{g} = - \int_{\partial A}V\left(\partial_{\nu}u\right)^{2}d\sigma_{g}.
\end{align}
Return to (\ref{do}) and (\ref{pang}), the two boundary contributions are $\frac{1}{2}\int_{\partial A}|\nabla u|^{2}Vd\sigma_{g}$ and $-\int_{\partial A}V\left(\partial_{\nu}u\right)^{2}d\sigma_{g}$. But on $\partial A$ the tangential derivative of $u$ vanishes (since $u$ is constant there), hence $|\nabla u|^{2}|_{\partial A} = \left(\partial_{\nu}u\right)^{2}$. Therefore, the sum of boundary contributions equals
\begin{align}\label{shesh}
\frac{1}{2}\int_{\partial A}\left(\partial_{\nu}u\right)^{2}V d\sigma_{g} - \int_{\partial A}\left(\partial_{\nu}u\right)^{2}V d\sigma_{g} = -\frac{1}{2}\int_{\partial A}V\left(\partial_{\nu}u\right)^{2}d\sigma_{g}.
\end{align}
So far we have converted (\ref{do}) into
\begin{align}\label{haft}
\frac{dE}{dt} = -\frac{1}{2}\int_{A}\left(\partial_{t}g\left( \nabla u, \nabla u\right) -\frac{1}{2}|\nabla u|^{2}\operatorname{tr}_{g}\left(\partial_{t}g\right)\right) d\mu_{g} - \frac{1}{2}\int_{\partial A}V\left(\partial_{\nu}u\right)^{2} d\sigma_{g}.
\end{align}
This is already a corrected exact formula; the derivative of energy equals a purely metric interior term plus a boundary shape term. Next, we should convert the boundary term into an interior integral.\\
We recall that on a Riemannian manifold, the Bochner identity for a smooth function $u$ is
\begin{align*}
\Delta\left(\frac{1}{2}|\nabla u|^{2}\right) = |\operatorname{Hess}u|^{2} + \langle \nabla u, \nabla\left(\Delta u\right)\rangle + \operatorname{Ric}\left( \nabla u, \nabla u\right).
\end{align*}
In dimension $2$, the Ricci curvature satisfies $\operatorname{Ric} = Kg$, so for our harmonic potential $\left(\Delta u = 0\right)$ this simplifies to
\begin{align}\label{hasht}
\Delta\left(\frac{1}{2}|\nabla u|^{2}\right) = |\operatorname{Hess}u|^{2} + K|\nabla u|^{2}.
\end{align}
Integrate (\ref{hasht}) over $A$ and apply the divergence theorem;
\begin{align}\label{noh}
\int_{A}\left(|\operatorname{Hess}u|^{2} + K|\nabla u|^{2}\right) d\mu_{g} = \int_{A}\Delta\left(\frac{1}{2}|\nabla u|^{2}\right) d\mu_{g} = \int_{\partial A}\frac{1}{2}\partial_{\nu}\left(|\nabla u|^{2}\right) d\sigma_{g}.
\end{align}
Thus the left--hand side interior quantity can be expressed as a single boundary normal derivative integral. On $\partial A$ the tangential derivative of $u$ vanishes (as $u$ is constant on each component boundary), so $\nabla u|_{\partial A} = \left(\partial_{\nu}u\right)\nu$. Therefore,
\begin{align*}
\partial_{\nu}\left(|\nabla u|^{2}\right) = \partial_{\nu}\left(\left(\partial_{\nu}u\right)^{2}\right) = 2\left(\partial_{\nu}u\right)\partial_{\nu}\left(\partial_{\nu}u\right) = 2\left(\partial_{\nu}u\right)\operatorname{Hess}u\left(\nu, \nu\right).
\end{align*}
Hence (\ref{noh}) yields
\begin{align}\label{dah}
\int_{A}\left(|\operatorname{Hess}u|^{2} + K|\nabla u|^{2}\right)d\mu_{g} = \int_{\partial A}\left(\partial_{\nu}u\right)\operatorname{Hess}u\left(\nu, \nu\right) d\sigma_{g}.
\end{align}
At a boundary point, one may write local coordinates $\left(s, \mathfrak{n}\right)$ where $s$ is arc length along $\partial A$ and $\mathfrak{n}$ is signed distance in the normal direction (so $\nu = \partial_{\mathfrak{n}}$). A standard decomposition of the Laplacian near the boundary is
\begin{align*}
\Delta u = \partial^{2}_{\mathfrak{n}}u + \kappa\partial_{\mathfrak{n}}u + \partial^{2}_{s}u,
\end{align*}
where $\kappa$ is the geodesic curvature of $\partial A$ in $M$ (sign convention: $\kappa >0$ for boundary curving toward the interior). On the boundary $u|_{\partial A}$ is constant on each component, hence $\partial_{s}u =0$ and therefore $\partial_{s}^{2}u =0$ there. Using $\Delta u =0$, we obtain at boundary points 
\begin{align*}
\partial^{2}_{\mathfrak{n}}u + \kappa\partial_{\mathfrak{n}}u = 0,
\end{align*}
i.e.,
\begin{align}\label{yazdah}
\operatorname{Hess}u\left(\nu, \nu\right)|_{\partial A} = -\kappa\partial_{\nu}u.
\end{align}
Substitute (\ref{yazdah}) in (\ref{dah}), we get
\begin{align*}
\int_{A}\left(\operatorname{Hess}u|^{2} + K|\nabla u|^{2}\right)d\mu_{g} = \int_{\partial A}\left(\partial_{\nu}u\right)\left(-\kappa\partial_{\nu}u\right)d\sigma_{g} = -\int_{\partial A}\kappa\left(\partial_{\nu}u\right)^{2}d\sigma_{g},
\end{align*}
or equivalently
\begin{align}\label{davazdah}
\int_{\partial A}\kappa\left(\partial_{\nu}u\right)^{2}d\sigma_{g} = -\int_{A}\left(|\operatorname{Hess}u|^{2} + K|\nabla u|^{2}\right) d\mu_{g}.
\end{align}
Curve shortening flow (the usual geometric CSF) moves each boundary curve with normal velocity equal to the curvature in the inward normal direction
\begin{align*}
\partial_{t}x = \kappa\nu.
\end{align*}
Our sign convention uses $\nu = \text{outward normal}$, so the outward normal velocity $V$ of CSF equals $V = -\kappa$ (CSF moves the boundary inward with speed $\kappa$, thus $V<0$ when $\kappa >0$). Thus for CSF we set
\begin{align*}
V = -\kappa.
\end{align*}
Insert this into the boundary term of (\ref{haft}), we see
\begin{align*}
-\frac{1}{2}\int_{\partial A}V\left(\partial_{\nu}u\right)^{2} d\sigma_{g} = -\frac{1}{2}\int_{\partial A}\left(-\kappa\right)\left(\partial_{\nu}u\right)^{2}d\sigma_{g} = \frac{1}{2}\int_{\partial A}\kappa\left(\partial_{\nu}u\right)^{2}d\sigma_{g}.
\end{align*}
Use (\ref{davazdah}) to replace the boundary integral by the interior integral
\begin{align}\label{sizdah}
\frac{1}{2}\int_{\partial A}\kappa\left(\partial_{\nu}u\right)^{2}d\sigma_{g} = -\frac{1}{2}\int_{A}\left(|\operatorname{Hess}u|^{2} + K|\nabla u|^{2}\right)d\mu_{g}.
\end{align}
Therefore, combining (\ref{haft}) with (\ref{sizdah}), we obtain the general formula for the derivative of energy when the boundary moves by CSF
\begin{align*}
\frac{dE}{dt} = -\frac{1}{2}\int_{A}\left(|\operatorname{Hess}u|^{2} + K|\nabla u|^{2} + \partial_{t}g\left(\nabla u, \nabla u\right) - \frac{1}{2}|\nabla u|^{2}\operatorname{tr}_{g}\left(\partial_{t}g\right)\right)d\mu_{g}.
\end{align*}
\end{proof}
In the above proof, $\eta$ signed distance in the normal direction for a point $x$ near the boundary, $\eta\left(x\right)$ is the geodesic distance from $x$ to the boundary taken positive outward. Thus $\left(\mathrm{s}, \eta\right)$ from local "Fermi coordinates" near $\partial A$: $\mathrm{s}$ runs tangentially along the boundary and $\eta$ measures how far you move in the normal direction.
\begin{remark}[{\bf Some specialisations }]
\begin{itemize}
\item One may prefer to display the interior metric term as the contraction of $\partial_{t}g$ with a trace free symmetric tensor. Thus the final integrand can be rewritten as
\begin{align*}
-\frac{1}{2}\left(\partial_{t}g\left(\nabla u, \nabla u\right) - \frac{1}{2}|\nabla u|^{2}\operatorname{tr}_{g}\left(\partial_{t}g\right)\right) = -\frac{1}{4}\langle \partial_{t}g, 2\nabla u \otimes \nabla u -|\nabla u|^{2}g\rangle.
\end{align*}
\item If $\partial_{t}g =0$ (Fixed metric) and the boundary moves by CSF (so $V = -\kappa$), the final integral reduces to the Topping's identity (in Riemannian case) \cite{s1}
\begin{align*}
\frac{dE}{dt} = -\frac{1}{2}\int_{A}\left(|\operatorname{Hess}u|^{2} + K|\nabla u|^{2}\right)d\mu_{g}.
\end{align*}
This shows $\dot{E} \leq 0$, if the surface has non--negative Gaussian curvature. In particular on a flat surface $\left( K \equiv 0\right)$, $E$ decreases and the deficit $\frac{1}{2}\int |\operatorname{Hess}u|^{2} $ controls the rate. 
\item Take $\partial_{t}g = -2Kg$ (metric evolves by $2$D Ricci flow and boundary curves evolve under CSF). Thus, the metric--variation integrands become
\begin{itemize}
\item[a)] $\partial_{t}g\left(\nabla u, \nabla u\right) = -2K|\nabla u|^{2}$.
\item[b)] $\operatorname{tr}_{g}\left(\partial_{t}g\right) = -4K$, (since $\operatorname{tr}_{g}\left(g\right) =2$ in $2$D) 
\end{itemize}
Therefore, the final identity becomes
\begin{align*}
\frac{dE}{dt} = -\frac{1}{2}\int_{A}\left(|\operatorname{Hess}u|^{2} + K|\nabla u|^{2}\right) d\mu_{g}.
\end{align*}
It means that, the Ricci flow term cancels the explicit metric--variation term in dimension two, so the previous Topping's identity remains valid even when the ambient metric evolves by the Ricci flow, provided the boundary moves by curve shortening flow.
\end{itemize}
\end{remark}

\section{Spectral Comparison of an Annulus and a Cylinder}
\subsection{Rellich--type Formula and the Deficit Boundary Term}
Define the vector field (on $A$)
\begin{align*}
Y:= |\nabla \phi |^{2}X - 2\left(X. \nabla \phi\right)\nabla \phi.
\end{align*}
Here $X$ is a vector field such that $X. \nabla \phi := \langle X, \nabla \phi\rangle$ (we may consider $X$ is every extension of $\mathcal{V}\nu$). The reason to consider $Y$ is that its boundary flux produces the boundary quantity we want, while its divergence can be expressed in interior terms that involve $\nabla X$ and $\nabla \phi$. By computing $\operatorname{div}Y$ and integrate it over $A$, one has the standard Rellich--type identity on a Riemannian manifold \cite{b2,r1}
\begin{align}\label{chardah}
\int_{\partial A}\langle X, \nu\rangle \left(\partial_{\nu}\phi\right)^{2} d\sigma_{g} = -\int_{A}\lbrace\left(\operatorname{div}X\right)|\nabla \phi |^{2} - 2\langle \nabla X, \nabla \phi \otimes \nabla \phi \rangle - 2\left(X. \nabla \phi\right)\Delta \phi\rbrace d\mu_{g}.
\end{align} 
Since $-\Delta \phi = \lambda \phi$, then
\begin{align*}
-2\left(X. \nabla \phi\right)\Delta \phi = 2\lambda \phi \left(X.\nabla \phi\right),
\end{align*}
thus we observe that
\begin{align*}
2\lambda \phi\left(X. \nabla \phi\right) = \lambda X.\nabla \left(\phi^{2}\right) = \lambda \operatorname{div}\left(\phi^{2}X\right) - \lambda \phi^{2} \operatorname{div}X.
\end{align*}
Integrate over $A$ (the boundary term vanishes), we have
\begin{align*}
\int_{A}2\lambda \phi\left(X.\nabla \phi\right)d\mu_{g} = -\lambda \int_{A}\phi^{2}\operatorname{div}X d\mu_{g}.
\end{align*}
Substituting into the (\ref{chardah}) gives the compact form
\begin{align}\label{panzdah}
\int_{\partial A}\langle X, \nu\rangle\left(\partial_{\nu}\phi\right)^{2} d\sigma_{g} = -\int_{A}\lbrace\left(\operatorname{div}X\right)\left(|\nabla \phi |^{2} - \lambda \phi^{2}\right) -2\langle \nabla X, \nabla \phi \otimes \nabla \phi \rangle \rbrace d\mu_{g}.
\end{align}
Choose any smooth extension $X$ of the boundary vector field $\mathcal{V}\nu$ into the interior, and bound the right--hand side in terms of $L^{\infty}$ norms of $\operatorname{div}X$ and $\nabla X$ and interior $L^{1}$--type quantities in $\phi$ (we will later replace this extension by the smooth extension inside the collar). From (\ref{panzdah}) we have
\begin{align}\label{shanzdah}
|\int_{\partial A}\mathcal{V}\left(\partial_{\nu}\phi\right)^{2}d\sigma_{g}| &= |\int_{A}\left(\operatorname{div}X\right)\left(|\nabla \phi |^{2} - \lambda \phi^{2}\right)d\mu_{g} - 2\int_{A}\langle \nabla X, \nabla \phi \otimes \nabla \phi \rangle d\mu_{g}| \nonumber\\
&\leq \|\operatorname{div} X \|_{L^{\infty}\left(A\right)}\int_{A}| |\nabla \phi |^{2} - \lambda \phi^{2}|d\mu_{g}\nonumber\\
&+ 2\|\nabla X\|_{L^{\infty}\left(A\right)}\int_{A}|\nabla \phi |^{2} d\mu_{g}.
\end{align} 
By using eigen--relation (normalization of $\phi$), one can see
\begin{align*}
\int_{A}| |\nabla \phi |^{2} - \lambda \phi^{2}|d\mu_{g} \leq \int_{A}|\nabla \phi |^{2}d\mu_{g} + \lambda \int_{A}\phi^{2} d\mu_{g} = 2\lambda.
\end{align*}
Therefore, (\ref{shanzdah}) yields the simple bound
\begin{align*}
|\int_{\partial A}\mathcal{V}\left(\partial_{\nu}\phi\right)^{2}| \leq 2\lambda\left(\|\operatorname{div}X\|_{L^{\infty}\left(A\right)} + \|\nabla X\|_{L^{\infty}\left(A\right)}\right).
\end{align*}
Here in the extension of $X$, $\mathcal{V}: \partial A \rightarrow \mathbb{R}$ is a general (arbitrary) boundary weight and is different from boundary normal velocity in geometric flows.
\begin{remark}[{\bf Few Points on Extension $X$}]
We need $X$ such that $X|_{\partial A} = \mathcal{V}\nu$ (so $\langle X, \nu\rangle = \mathcal{V}$ on $\partial A$). A convenient constructive choice uses a tubular (Fermi) neighborhood of $\partial A$.
\begin{itemize}
\item Let $\rho > 0$ be smaller than the injectivity radius of $\partial A$ in $M$. In the tubular neighborhood $U_{\rho} = \lbrace p \in A| \operatorname{dist}_{g}\left(p, \partial A\right) < \rho \rbrace$ introduce normal coordinates $\left(s, \mathfrak{n}\right)$ where $s$ is arc--length along $\partial A$, $\operatorname{dist}_{g}$ means the Riemannian distance induced by Riemannian metric $g$, and $\mathfrak{n}$ is signed distance along the normal, i.e., $\mathfrak{n} =0$ on $\partial A$, $\mathfrak{n}>0$ pointing toward the interior (or outward--pick consistent sign). In these coordinates $\nu = \partial_{\mathfrak{n}}$, and the metric takes the well-known form $g = d\mathfrak{n}^{2} + a\left(\mathfrak{n}, s\right)^{2}ds^{2}$ with $a\left(0,s\right) =1$ and $\partial_{\mathfrak{n}}a\left(0,s\right) = -\kappa\left(s\right)$ etc. All geometric derivatives of $a$ are controlled by curvature.
\item Choose a smooth cut--off function $\zeta\left(\mathfrak{n}\right)$ with $\zeta\left(0\right) =1$, $\zeta\left(\mathfrak{n}\right) =0$, for $\mathfrak{n} \geq \rho$, and $|\zeta^{\prime}| \lesssim \rho^{-1}$. Define the extension in the collar 
\begin{align*}
X\left(s,\mathfrak{n}\right) = \zeta\left(\mathfrak{n}\right)\mathcal{V}\left(s\right)\nu\left(s,\mathfrak{n}\right),
\end{align*}
where $\mathcal{V}\left(s\right)$ is the given boundary scalar (depends on boundary point). Extend $X$ by zero outside the collar.
\item Careful estimates (standard in geometry) give, for some constant $C$ depending only on $\rho$ and the ambient curvature bounds and the $C^{1}$--geometry of $\partial A$,
\begin{align*}
\|\nabla X\|_{L^{\infty}\left(A\right)} \leq C\left(\rho^{-1}\|\mathcal{V}\|_{L^{\infty}\left(\partial A\right)} + \|\partial_{s}\mathcal{V}\|_{L^{\infty}\left(\partial A\right)} + \|\kappa \mathcal{V}\|_{L^{\infty}\left(\partial A\right)}\right).
\end{align*}
Thus the sub--norm of $\nabla X$ (similarly $\operatorname{div}X$) reduces to boundary norms of $\mathcal{V}$ and its tangential derivative $\partial_{s}V$--exactly the geometric quantities available if $\mathcal{V}$ equals $\left(-\kappa\right)$ or similarly, plugging these bound into Rellich--type identity, yields a concrete inequality of the form
\begin{align*}
|\int_{\partial A}\mathcal{V}\left(\partial_{\nu}\phi\right)^{2} d\sigma | \leq C\lambda\left(\rho^{-1}\|\mathcal{V}\|_{L^{\infty}\left(\partial A\right)} + \|\partial_{s}\mathcal{V}\|_{L^{\infty}\left(\partial A\right)} + \|\kappa \mathcal{V}\|_{L^{\infty}\left(\partial A\right)}\right),
\end{align*}
where $C$ depends only on the geometry of the collar (curvature bounds, injectivity radius).
\end{itemize}
\end{remark}
We recall the extension of the vector field $X$, where we want $\langle X, \nu\rangle = \mathcal{V}$ on $\partial A$. A convenient (and standard) choice that allows insertion of $\operatorname{Hess}u$ is to take $X$ proportional to $\nabla u$ in a collar neighborhood and vanishing away from the collar. Precisely, let $\mathfrak{n}$, $U_{\rho}$, $s$, and $\zeta$ be as same as above, then define the boundary function
\begin{align*}
\alpha\left(s\right) := \frac{\mathcal{V}\left(s\right)}{\partial_{\nu}u\left(s\right)}, \,\,\,\,\, s \in \partial A.
\end{align*}
If we consider the nondegeneracy assumption, i.e., $m:= \inf_{\partial A} \partial_{\nu}u >0$, then $\alpha$ is well--defined and bounded,
\begin{align*}
\|\alpha\|_{L^{\infty}\left(\partial A\right)} \leq \frac{\|\mathcal{V}\|_{L^{\infty}}}{m}.
\end{align*}
Now extend $\alpha$ smoothly to a function $\tilde{\alpha}\left(s,\mathfrak{n}\right)$ on the collar by letting $\tilde{\alpha}\left(s, \mathfrak{n}\right) = \alpha\left(s\right)\zeta\left(\mathfrak{n}\right)$ (and then extend by $0$ outside the collar). Finally put
\begin{align*}
X := \tilde{\alpha}\nabla u,
\end{align*}
then on the boundary (where $u =0$ and $\zeta\left(0\right) =1$) we have
\begin{align*}
\langle X, \nu\rangle = \alpha \langle \nabla u, \nu\rangle = \alpha \partial_{\nu}u = \mathcal{V},
\end{align*}
so $X$ is an admissible extension. 
\begin{theorem}\label{thyek}
Consider $\left(M,g\right)$ as a smooth Riemannian surface and let $A\subset M$ be an embedded annulus bounded by boundary curves $\Gamma_{0}$ and $\Gamma_{1}$, $u$ is harmonic capacity potential, $\phi$ is a normalized Dirichlet eigenfunction of Laplacian, and $\mathcal{V}$ denotes a prescribed boundary weight. Also assume the nondegeneracy condition, i.e., $m >0$. Then, there exists  a constant $C$ such that for smooth prescribed extension $X$ of $\mathcal{V}\nu$ we have
\begin{align*}
|\int_{\partial A} \mathcal{V}\left(\partial_{\nu}\phi\right)^{2} d\sigma | \leq \frac{C}{m}\lambda\Big\lbrace \|\mathcal{V}\|_{L^{\infty}}\sqrt{\mathcal{D}} +  \sqrt{E}\left(\|\partial_{s}\mathcal{V}\|_{L^{\infty}} + \frac{\|\mathcal{V}\|_{L^{\infty}}}{m}\left(\sqrt{\mathcal{D}} + 1\right) + \frac{\|\mathcal{V}\|_{L^{\infty}}}{\rho}\right) \Big\rbrace,
\end{align*}
where $E$ and $\mathcal{D}$ denote the energy and the deficit functionals of $u$ respectively.
\end{theorem}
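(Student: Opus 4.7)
The plan is to insert the specific extension $X = \tilde{\alpha}\,\nabla u$ constructed immediately before the theorem into the Rellich-type identity (\ref{panzdah}), exploit harmonicity of $u$ to kill the inconvenient $\Delta u$ terms, and then close the estimate by Cauchy--Schwarz against the energy $E$ and the deficit $\mathcal{D}$. By the very choice of the extension, $\langle X, \nu\rangle = \mathcal{V}$ on $\partial A$, so the left-hand side of (\ref{panzdah}) is exactly $\int_{\partial A}\mathcal{V}(\partial_{\nu}\phi)^{2}\,d\sigma_{g}$ and the whole task reduces to controlling the two interior terms on the right.

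First I would compute $\operatorname{div}X = \langle\nabla\tilde{\alpha},\nabla u\rangle$ (harmonicity collapses the $\tilde{\alpha}\Delta u$ contribution) and $\nabla X = \nabla\tilde{\alpha}\otimes\nabla u + \tilde{\alpha}\operatorname{Hess}u$. Substituting into (\ref{panzdah}) splits the interior integral into three pieces: one proportional to $\langle\nabla\tilde{\alpha},\nabla u\rangle(|\nabla\phi|^{2}-\lambda\phi^{2})$, one to $\langle\nabla\tilde{\alpha},\nabla\phi\rangle\langle\nabla u,\nabla\phi\rangle$, and the crucial Hessian term $\tilde{\alpha}\,\operatorname{Hess}u(\nabla\phi,\nabla\phi)$. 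I would then bound each via Cauchy--Schwarz, pairing an $L^{\infty}$ bound on $\tilde{\alpha}$ or $\nabla\tilde{\alpha}$ against $\|\nabla u\|_{L^{2}} = \sqrt{2E}$ and $\|\operatorname{Hess}u\|_{L^{2}} = \sqrt{2\mathcal{D}}$, together with the spectral identities $\int_{A}|\nabla\phi|^{2}=\lambda$, $\int_{A}\phi^{2}=1$, and the majoration $\int_{A}\bigl||\nabla\phi|^{2}-\lambda\phi^{2}\bigr|\leq 2\lambda$ already used in the paragraph preceding the theorem. The Hessian term will produce the $\|\mathcal{V}\|_{L^{\infty}}\sqrt{\mathcal{D}}$ contribution, the tangential part of $\nabla\tilde{\alpha}$ will produce the $\sqrt{E}\,\|\partial_{s}\mathcal{V}\|_{L^{\infty}}$ and mixed $\sqrt{E}\,\|\mathcal{V}\|_{L^{\infty}}\sqrt{\mathcal{D}}/m$ terms, and the normal part (carried by $\zeta^{\prime}$, with $|\zeta^{\prime}|\lesssim\rho^{-1}$) will produce the $\sqrt{E}\,\|\mathcal{V}\|_{L^{\infty}}/\rho$ term. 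The common prefactor $\lambda/m$ then arises from the spectral bounds on $\phi$ combined with the nondegeneracy bound $\|\tilde{\alpha}\|_{L^{\infty}}\leq\|\mathcal{V}\|_{L^{\infty}}/m$.

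The main obstacle is the $L^{\infty}$ control of $\partial_{s}\tilde{\alpha}$. Since $\tilde{\alpha}(s,\mathfrak{n}) = (\mathcal{V}(s)/\partial_{\nu}u(s))\,\zeta(\mathfrak{n})$, differentiating gives $|\partial_{s}\tilde{\alpha}|\lesssim \|\partial_{s}\mathcal{V}\|_{L^{\infty}}/m + \|\mathcal{V}\|_{L^{\infty}}|\partial_{s}\partial_{\nu}u|/m^{2}$, and the boundary quantity $\partial_{s}\partial_{\nu}u$ is only as regular as $\operatorname{Hess}u|_{\partial A}$, whereas the deficit $\mathcal{D}$ controls $\operatorname{Hess}u$ only in interior $L^{2}$. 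The bridge is the Fermi-coordinate identity $\operatorname{Hess}u(\partial_{s},\nu) = \partial_{s}\partial_{\nu}u - \Gamma^{\bullet}_{s\nu}\partial_{\bullet}u$, giving the pointwise bound $|\partial_{s}\partial_{\nu}u|\leq|\operatorname{Hess}u|+C(\rho_{0},K_{0})|\nabla u|$; combined with a trace estimate in the collar of width $\rho$, this produces exactly the factor $\sqrt{\mathcal{D}}+1$ in the final inequality, the $+1$ absorbing the Christoffel/curvature remainder coming from the ambient bound $|K|\leq K_{0}$ and the geodesic curvature of $\partial A$. Collecting the three groups and pulling out a universal constant $C$ that absorbs the geometry-only quantities $(\rho_{0},K_{0})$ and the $C^{1}$ norm of $\zeta$ yields the stated bound.
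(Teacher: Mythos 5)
Your overall architecture matches the paper's: insert $X=\tilde{\alpha}\nabla u$ into the Rellich identity~(\ref{panzdah}), split the interior integral into the three pieces $T_1,T_2,T_3$, bound each by Cauchy--Schwarz against $\sqrt{E}$ and $\sqrt{\mathcal{D}}$, control $\tilde\alpha$ and $\nabla\tilde\alpha$ via nondegeneracy, the cutoff, and the identity $\partial_s(\partial_\nu u)=\operatorname{Hess}u(\tau,\nu)$, and absorb collar geometry into a constant. That is exactly how the paper proceeds.

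The genuine gap is in the spectral side of your Cauchy--Schwarz argument. You plan to pair an $L^\infty$ bound on $\tilde\alpha$ (resp.\ $\nabla\tilde\alpha$) against $\|\operatorname{Hess}u\|_{L^2}=\sqrt{2\mathcal{D}}$ (resp.\ $\|\nabla u\|_{L^2}=\sqrt{2E}$). For $T_3$ that factorization leaves behind the factor $\|\,\nabla\phi\otimes\nabla\phi\,\|_{L^2(A)}=\|\nabla\phi\|_{L^4(A)}^2$, and for $T_1$ it leaves $\bigl\||\nabla\phi|^2-\lambda\phi^2\bigr\|_{L^2(A)}$. Neither of these is controlled by the three facts you list, since $\int_A|\nabla\phi|^2=\lambda$, $\int_A\phi^2=1$, and $\int_A\bigl||\nabla\phi|^2-\lambda\phi^2\bigr|\le 2\lambda$ are all $L^1$-level statements; the last one is the crude estimate used \emph{before} the theorem (inequality~(\ref{shanzdah})) where only $\|\operatorname{div}X\|_{L^\infty}$ and $\|\nabla X\|_{L^\infty}$ appear as weights, not $\sqrt{\mathcal D}$ or $\sqrt{E}$. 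To close your Cauchy--Schwarz you need the additional Sobolev/elliptic ingredient $\|\nabla\phi\|_{L^4(A)}^2\le C\,\lambda$ and $\|\phi\|_{L^4(A)}^2\le C$, which the paper derives from the $H^1$-normalization of $\phi$, the equation $-\Delta\phi=\lambda\phi$, and two-dimensional Sobolev embedding, and which then gives $\bigl\||\nabla\phi|^2-\lambda\phi^2\bigr\|_{L^2}\le C\,\lambda$. Without that step the $\lambda$ prefactor in the theorem is not accounted for. Adding this estimate (with the constant depending only on the collar geometry, as in the standing assumptions) repairs the argument and makes it agree with the paper's proof.
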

\begin{remark}
We recall that in the Rellich--type boundary identity we work with an arbitrary smooth scalar function 
\begin{align*}
\mathcal{V}: \partial A \rightarrow \mathbb{R},
\end{align*}
which appears only as a boundary weight used to match the normal component of the test vector field $X$. This function is independent of any geometric evolution. By contrast, when the boundary $\partial A$ evolves under the curve shortening flow, its outward normal velocity is $V\nu$ and precisely, $V = -\kappa$. The two quantities play different roles, e.g., $\mathcal{V}$ is analytic while $V$ is geometric. They coincide only when we intentionally substitute the geometric velocity into the Rellich identity, namely when computing the time--derivative of a geometric quantity such as the eigenvalue or the capacity energy. Outside this step, the two symbols should be viewed as distinct. 
\end{remark}
\begin{remark}
The nondegeneracy condition $m>0$ ensures that the harmonic coordinate $u$ is a submersion near the boundary and that the harmonic conjugate is globally well-defined. This condition is stable under small perturbations of the geometry and is satisfied, for instance, when the annulus is sufficiently close to a flat cylinder.
\end{remark}
\begin{proof}[{\bf Proof of Theorem \ref{thyek}}]
Since $u$ is harmonic, we can compute
\begin{align*}
\operatorname{div}X = \langle \nabla \tilde{\alpha}, \nabla \alpha\rangle + \tilde{\alpha}\Delta u = \langle \nabla \tilde{\alpha}, \nabla u\rangle,
\end{align*}
and 
\begin{align*}
\nabla X = \nabla \left(\tilde{\alpha}\nabla u\right) = \nabla \tilde{\alpha} \otimes \nabla u + \tilde{\alpha} \operatorname{Hess}u.
\end{align*}
Plugging these into Rellich--type identity yields
\begin{align*}
\mathcal{B} := \int_{\partial A}\mathcal{V}\left(\partial_{\nu}\phi\right)^{2} d\sigma &= -\int_{A} \lbrace \langle \nabla \tilde{\alpha}, \nabla u\rangle\left(|\nabla \phi |^{2} - \lambda \phi^{2}\right) - 2\langle \nabla \tilde{\alpha} \otimes \nabla u + \tilde{\alpha}\operatorname{Hess}u, \nabla \phi \otimes \nabla \phi \rangle \rbrace d\mu \\
&= -\int_{A} \langle \nabla \tilde{\alpha}, \nabla u\rangle \left(|\nabla \phi |^{2} - \lambda \phi \right) d\mu + 2\int_{A}\langle \nabla \tilde{\alpha} \otimes \nabla u, \nabla \phi \otimes \nabla \phi \rangle d\mu \\
&+ 2\int_{A} \tilde{\alpha}\langle \operatorname{Hess}u, \nabla \phi \otimes \nabla \phi \rangle d\mu,
\end{align*}
where we write these three terms as $\mathcal{B} = T_{1} + T_{2} + T_{3}$ with obvious definitions.\\
{\bf Upper bound for $T_{3}$:}\\
Consider $T_{3} = 2\int_{A} \tilde{\alpha}\langle \operatorname{Hess}u, \nabla \phi \otimes \nabla \phi \rangle d\mu$, then apply Cauchy--Schwartz on $A$ with the Hilbert--Schmidt inner product on $2$--tensors
\begin{align*}
|T_{3}| &\leq 2\|\tilde{\alpha}\|_{L^{\infty}\left(A\right)}\left(\int_{A}|\operatorname{Hess}u|^{2}d\mu\right)^{\frac{1}{2}}\left(\int_{A}|\nabla \phi \otimes \nabla \phi |^{2} d\mu\right)^{\frac{1}{2}}\\
&= 2\|\tilde{\alpha}\|_{L^{\infty}\left(A\right)}\sqrt{2\mathcal{D}} . \left(\int_{A}|\nabla \phi |^{4}d\mu\right)^{\frac{1}{2}}.
\end{align*}
So
\begin{align*}
|T_{3}| \leq 2\sqrt{2}\|\tilde{\alpha}\|_{L^{\infty}\left(A\right)}\sqrt{\mathcal{D}}\|\nabla \phi\|^{2}_{L^{4}\left(A\right)}.
\end{align*}
Recall $\tilde{\alpha}$ was built from $\alpha = \mathcal{V} \slash \partial_{\nu}u$ by multiplying with a cutoff--hence $\|\tilde{\alpha}\|_{L^{\infty}\left(A\right)} \leq \|\alpha\|_{L^{\infty}\left(\partial A\right)} \leq \|\mathcal{V}\|_{L^{\infty}\left(\partial A\right)}\slash m$ (by non--degeneracy condition). Thus
\begin{align}\label{hefdah}
|T_{3}| \leq C_{1}\frac{\|\mathcal{V}\|_{L^{\infty}\left(\partial A\right)}}{m}\sqrt{\mathcal{D}}\|\nabla \phi\|^{2}_{L^{4}},
\end{align}
where here clearly $C_{1} = 2\sqrt{2}$. So the deficit $\sqrt{\mathcal{D}}$ appears multiplied by the $L^{2}$--norm of $\nabla \phi$.\\
{\bf Upper bound for $T_{2}$:}\\
Since 
\begin{align*}
T_{2} = 2\int_{A}\langle \nabla \tilde{\alpha} \otimes \nabla u, \nabla \phi \otimes \nabla \phi \rangle = 2\int_{A}\left(\nabla \tilde{\alpha}. \nabla u\right)|\nabla \phi |^{2} d\mu.
\end{align*}
By applying H\"older$\slash$Cauchy--Schwarz, we have
\begin{align*}
|T_{2}| &\leq 2\|\nabla \tilde{\alpha}\|_{L^{\infty}\left(A\right)}\int_{A}|\nabla u||\nabla \phi |^{2} d\mu \\
&\leq 2\|\nabla \tilde{\alpha}\|_{L^{\infty}\left(A\right)}\|\nabla u\|_{L^{2}\left(A\right)}\|\nabla \phi\|^{2}_{L^{4}\left(A\right)}.
\end{align*}
We recall that $\|\nabla u\|^{2}_{L^{2}\left(A\right)} = 2E$. So
\begin{align}\label{hejdah}
|T_{2}| \leq 2\sqrt{2}\|\nabla \tilde{\alpha}\|_{L^{\infty}\left(A\right)}\sqrt{E}\|\nabla \phi\|_{L^{4}}^{2}.
\end{align}
{\bf Upper bound for $T_{1}$:}\\
By applying H\"older's inequality to $T_{1} = -\int_{A}\langle \nabla \tilde{\alpha}, \nabla u\rangle\left(|\nabla \phi |^{2} - \lambda \phi^{2} \right) d\mu$, we see
\begin{align*}
|T_{1}| \leq \|\nabla \tilde{\alpha}\|_{L^{\infty}\left(A\right)}\|\nabla u\|_{L^{2}\left(A\right)}\| |\nabla \phi |^{2} - \lambda \phi^{2}\|_{L^{2}\left(A\right)}.
\end{align*}
We need to control the $L^{2}$--norm, $\| |\nabla \phi |^{2} - \lambda \phi \|_{L^{2}}$. Observe that
\begin{align*}
\| |\nabla \phi |^{2} - \lambda \phi^{2}\|_{L^{2}} \leq \| \nabla \phi \|^{2}_{L^{4}} + \lambda \|\phi \|_{L^{4}}^{2}.
\end{align*}
Sobolev $\slash$ elliptic estimates on $A$ ($2$D) give control of $\|\phi \|_{L^{4}}$ and $\|\nabla \phi\|_{L^{4}}$ in terms of $\lambda$ and fixed geometric constants (domain Sobolev constants). Concretely, there exists a constant $C_{s}$ depending only on $A$ (through injectivity radius and curvature bounds and boundary regularity) such that
\begin{align*}
\|\phi\|_{L^{4}} \leq C_{s}\|\phi\|_{H^{1}} \leq C_{s}\left(1 + \sqrt{\lambda}\right), \,\,\,\,\, \|\nabla \phi\|_{L^{4}} \leq C_{s}^{\prime} \lambda^{1\slash 2}\left(1+ \sqrt{\lambda}\right),
\end{align*}
where $C_{s}^{\prime}$ is an another constant depending on the same geometric quantities as $C_{s}$ and follows by elliptic estimates applied to $\phi$ solving $-\Delta \phi = \lambda \phi$; explicit dependence may be tracked, but for our propose it suffices to note these are controlled by polynomial powers of $\lambda$. To keep the expression explicit we set for some constant $C_{2}$, $C_{3}$ depending on Sobolev constants and geometry
\begin{align*}
\|\nabla \phi\|^{2}_{L^{4}} \leq C_{2}\lambda, \,\,\,\,\, \|\phi\|_{L^{4}}^{2} \leq C_{3}.
\end{align*}
One may want exact powers, then $C_{2}$ should be replaced by explicit Sobolev--elliptic constants; the important point is the dependence is via $\lambda$ and fixed data. Hence
\begin{align*}
\| |\nabla \phi |^{2} - \lambda \phi^{2}\|_{L^{2}} \leq C_{2}\lambda + C_{3}\lambda = C_{4}\lambda,
\end{align*}
for some $C_{4}$ depending only on the domain geometry and Sobolev constants. Therefore, using $\|\nabla u\|_{L^{2}} = \sqrt{2E}$, we get
\begin{align}\label{noozdah}
|T_{1}| \leq \|\nabla \tilde{\alpha}\|_{L^{\infty}\left(A\right)}\sqrt{2E}. C_{4}\lambda.
\end{align}
Combine with the earlier estimates--all three terms are a multiple of $\|\nabla \phi\|^{2}_{L^{4}}$ or $\lambda$, which we can control as noted.\\
{\bf Gathering all above results:}\\
We must express $\|\tilde{\alpha}\|_{L^{\infty}\left(A\right)}$ and $\|\nabla \tilde{\alpha}\|_{L^{\infty}\left(A\right)}$ in terms of boundary data $\mathcal{V}$, its tangential derivative $\partial_{s}\mathcal{V}$ and geometric--harmonic quantities. \\
We recall that $\alpha\left(s\right) = \frac{\mathcal{V}\left(s\right)}{\partial_{\nu}u\left(s\right)}$ on $\partial A$ and we assume the non--degeneracy $m >0$. Since $\partial_{\tau}u=0$ on $\partial A$, where $\tau$ denotes the unit tangent vector along the boundary curve, we have exactly
\begin{align*}
\partial_{s}\alpha = \frac{\partial_{s}\mathcal{V}}{\partial_{\nu}u} - \frac{\mathcal{V}\partial_{s}\left(\partial_{\nu}u\right)}{\left(\partial_{\nu}u\right)^{2}}= \frac{\partial_{s}\mathcal{V}}{\partial_{\nu}u}- \frac{\mathcal{V}\operatorname{Hess}u\left(\tau, \nu\right)}{\left(\partial_{\nu}u\right)^{2}}.
\end{align*}
Hence pointwise on $\partial A$,
\begin{align}\label{bist}
|\partial_{s}\alpha | \leq \frac{|\partial_{s}\mathcal{V}|}{m} + \frac{|\mathcal{V}|}{m^{2}}|\operatorname{Hess}u\left(\tau, \nu\right)|.
\end{align}
We extend $\alpha$ from $\partial A$ into the collar by $\tilde{\alpha}\left(s,\mathfrak{n}\right) = \zeta\left(\mathfrak{n}\right)\alpha\left(s\right)$, with $\zeta\left(0\right) =1$, $\zeta$ supported in $[0, \rho)$, and $|\zeta^{\prime}| \lesssim \rho^{-1}$. In the collar and zero outside
\begin{align*}
\nabla \tilde{\alpha} = \zeta\left(\mathfrak{n}\right)\partial_{s}\alpha \tau + \zeta^{\prime}\left(\mathfrak{n}\right)\alpha \partial_{\mathfrak{n}},
\end{align*}
so
\begin{align*}
\|\nabla \tilde{\alpha}\|_{L^{\infty}\left(a\right)} \leq C\left(\|\partial_{s}\alpha\|_{L^{\infty}\left(\partial A\right)} + \rho^{-1}\|\alpha\|_{L^{\infty}\left(\partial A\right)}\right),
\end{align*}
with $C$ depending only on coordinate distortion in the collar. \\
Using $\|\alpha \|_{L^{\infty}} \leq \|\mathcal{V}\|_{L^{\infty}}\slash m$ and (\ref{bist}),
\begin{align}\label{bistoyek}
\|\nabla \tilde{\alpha}\|_{L^{\infty}\left(A\right)} \leq C\left(\frac{\|\partial{s}\mathcal{V}\|_{L^{\infty}}}{m} + \frac{\|\mathcal{V}\|_{L^{\infty}}}{m^{2}} \|\operatorname{Hess}u\left(\tau, \nu\right)\|_{L^{\infty}\left(\partial A\right)} + \frac{\|\mathcal{V}\|_{L^{\infty}}}{m\rho}\right).
\end{align}
As before, we will replace $\|\operatorname{Hess}u\left(\tau, \nu\right)\|_{L^{\infty}\left(\partial A\right)}$ by elliptic control in terms of $\sqrt{\mathcal{D}}$ plus lower--order quantities specifically there exists $C_{ell}$ with
\begin{align*}
\|\operatorname{Hess}u\left(\tau, \nu\right)\|_{L^{\infty}\left(\partial A\right)} \leq C_{ell}\left(\sqrt{\mathcal{D}} + 1\right),
\end{align*}
so (\ref{bistoyek}) becomes (absorbing constants)
\begin{align*}
\|\nabla \tilde{\alpha}\|_{L^{\infty}} \leq C\left(\frac{\|\partial_{s}\mathcal{V}\|_{L^{\infty}}}{m} + \frac{\|\mathcal{V}\|_{L^{\infty}}}{m^{2}}\left(\sqrt{\mathcal{D}}+1\right) + \frac{\|\mathcal{V}\|_{L^{\infty}}}{m\rho}\right).
\end{align*}
By using $\|\nabla \phi\|^{2}_{L^{4}} \leq C_{2}\lambda$, and collecting (\ref{hefdah}), (\ref{hejdah}), and (\ref{noozdah}) (estimates for $T_{1}$, $T_{2}$, and $T_{3}$); there exists constant $C$ (depending only on geometry and Sobolev$\slash$elliptic constant), such that
\begin{align*}
|\mathcal{B}| \leq \frac{\|\mathcal{V}\|_{L^{\infty}}}{m} \sqrt{\mathcal{D}}\lambda + C\lambda \sqrt{E}\left( \frac{\|\partial_{s}\mathcal{V}\|_{L^{\infty}}}{m} + \frac{\|\mathcal{V}\|_{L^{\infty}}}{m^{2}}\left(\sqrt{\mathcal{D}}+1\right) + \frac{\|\mathcal{V}\|_{L^{\infty}}}{m\rho}\right).
\end{align*}
The dominent (leading contribution for small $\mathcal{D}$) is the first term, i.e., $|\mathcal{B}| \lesssim \lambda \frac{\|\mathcal{V}\|_{L^{\infty}}}{m} \sqrt{\mathcal{D}}$. Finally
\begin{align*}
|\mathcal{B}| \leq \frac{C}{m}\lambda\Big\lbrace \|\mathcal{V}\|_{L^{\infty}}\sqrt{\mathcal{D}} +  \sqrt{E}\left(\|\partial_{s}\mathcal{V}\|_{L^{\infty}} + \frac{\|\mathcal{V}\|_{L^{\infty}}}{m}\left(\sqrt{\mathcal{D}} + 1\right) + \frac{\|\mathcal{V}\|_{L^{\infty}}}{\rho}\right) \Big\rbrace.
\end{align*}
\end{proof}
\subsection{Geometric Stability and Spectral Gap}
We recall our setup as $\left(M,g\right)$ is a smooth $2$--dimensional Riemannian manifold; $g\left(t\right)$ is $C^{\infty}$ and evolve by the $2$D Ricci flow $\partial_{t}g = -2Kg$. For each $t$, $A\left(t\right) \subset M$ is embedded annulus with boundary $\partial A\left(t\right) = \Gamma_{0}\left(t\right) \cup \Gamma_{1}\left(t\right)$. $V \in C^{1}\left(\partial A\right)$ denotes the outward normal velocity of the boundary, i.e., a boundary point moves with $x_{t} = V\nu$ (so CSF corresponds to $V = -\kappa$ as earlier). In the Rellich--type identity we use a general boundary weight $\mathcal{V}$. When the annulus boundary evolves by the curve shortening flow (CSF), its outward normal speed is $V = -\kappa$. In the geometric evolution part of the document we set $\mathcal{V} = V$, so that the Rellich boundary integral matches exactly the geometric boundary variation term. Outside this assumption the two quantities remain conceptually distinct. \\
Now consider the variation formula of Lemma \ref{lemyek}, in a special case, if $\partial_{t}g = -2Kg$ then the interior metric term simplifies to
\begin{align*}
\int_{A}\left(-2K|\nabla \phi |^{2} + \lambda K \phi^{2}\right)d\mu_{g} = 2\lambda \int_{A}K\phi^{2}d\mu_{g}.
\end{align*}
So in the Ricci case we have the very compact identity
\begin{align*}
d\lambda \slash dt = -\mathcal{B} + 2\lambda \int_{A}K\phi^{2} d\mu_{g}.
\end{align*}
We should use this final relation for the combined CSF $+$ Ricci situation (in a case that $V = \mathcal{V}$). We now want to substitute the bound of boundary integral from the Theorem \ref{thyek} into this compact form of $\lambda$--variational formula. Let $\mathcal{R}\left(V\right)$ denote the right--side of the $|\mathcal{B}|$'s bound (i.e., there exists a constant $C$, such that $|\mathcal{B}| \leq C\mathcal{R}\left(V\right)$). Then
\begin{align*}
\frac{d\lambda}{dt} \geq -C\lambda \frac{\|V\|_{L^{\infty}}}{m} \sqrt{\mathcal{D}} + 2\lambda \int_{A}K\phi^{2},
\end{align*}
where the constant $C_{1}$ depends only on a fixed geometric data of the annulus $A$ and its collar, curvature bound, injectivity radius, collar width $\rho$ and Sobolev$\slash$elliptic constants.\\
To relate $\mathcal{D}$ to $\dot{E}$ we use the Topping's type identity that holds in the Ricci + CSF situation (derived earlier). Under this situation (Ricci + CSF) the variation formula from Lemma \ref{lemdo} becomes
\begin{align*}
dE \slash dt = -\frac{1}{2}\int_{A}\left(|\operatorname{Hess} u|^{2} + K|\nabla u|^{2}\right) d\mu_{g}.
\end{align*}
We recall that $\int_{A} |\operatorname{Hess}u|^{2} = 2D$, so 
\begin{align*}
\dot{E} = -\mathcal{D} - \frac{1}{2}\int_{A}K|\nabla u|^{2} d\mu_{g}.
\end{align*}
\begin{itemize}
\item If $K \geq 0$ on $A$, then $\frac{1}{2}\int_{A}K|\nabla u|^{2} \geq 0$, hence
\begin{align*}
\mathcal{D} \leq -\dot{E}.
\end{align*}
Therefore, $\sqrt{\mathcal{D}} \leq \sqrt{-\dot{E}}$. This is a clean substitution that eliminates the Hessian and expresses the deficit via how fast energy is decreasing.
\item If $K$ is bounded above by $K_{\operatorname{max}}$, then
\begin{align*}
\mathcal{D} \geq -\dot{E} - \frac{1}{2}K_{\operatorname{max}}\int_{A}|\nabla u|^{2} = -\dot{E}- K_{\operatorname{max}}E.
\end{align*}
\end{itemize}
Assume for simplicity, $K \geq 0$ on $A$ (so one may use $\mathcal{D} \leq -\dot{E}$), also under the previous assumptions (Ricci $+$ CSF), we have
\begin{align*}
\dot{\lambda} \geq -C_{1}\lambda \frac{\| V\|_{L^{\infty}}}{m}\sqrt{\mathcal{D}} + 2\lambda \int_{A}K\phi^{2} \geq -C_{1}\lambda \frac{\|V\|_{L^{\infty}}}{m}\sqrt{-\dot{E}} + 2\lambda \int_{A}K\phi^{2}.
\end{align*}
Dividing by $\lambda$ yields
\begin{align*}
\frac{\dot{\lambda}}{\lambda} \geq -C_{1}\frac{\|V\|_{L^{\infty}}}{m}\sqrt{-\dot{E}} + 2\int_{A}K\phi^{2}.
\end{align*}
If the energy $E$ decreases fast (so $-\dot{E}$ large) the right--hand side could be large in magnitute (negative contribution) and push $\frac{\dot{\lambda}}{\lambda}$ downward; conversely, if $-\dot{E}$ is small (the annulus is nearly the flat cylinder and the deficit small) then $\frac{\dot{\lambda}}{\lambda}$ is controlled by curvature average.
\begin{theorem}\label{thdo}
Let $\left(M,g\right)$ be a smooth Riemannian surface and $A \subset M$ is a smooth embedded annulus with boundary $\partial A = \Gamma_{0} \cup \Gamma_{1}$. Also assume
\begin{itemize}
\item The tubular neighborhood of width $\rho_{0}$ around $\partial A$ is well--defined and has uniformly bounded coordinate distortion ($\rho_{0}$).
\item Curvature bound $\|K_{g}\|_{L^{\infty}\left(A\right)} \leq K_{0}$.
\end{itemize}
Let $\lambda\left(A\right)$ and $\lambda_{\operatorname{cyl}}\left(h\right)$ are Dirichlet eigenvalue of $-\Delta_{g}$ on $A$ and the flat cylinder $S_{h}:= \left[0, h\right] \times \mathbb{S}^{1}$, respectively, where the modulus $h$ is given by $h = 1 \slash 2E$. Then under the boundary regularity and non--degeneracy condition ($ m >0$), there exist constants $\epsilon_{0}$, $C_{1}, C_{2} >0$, and $0<\alpha <1$, depending only on $\left(\rho_{0}, K_{0}\right)$ and the boundary regularity such that if $\mathcal{D} \leq \epsilon_{0}$, then
\begin{itemize}
\item {\bf (Geometric Stability).} There exists a conformal diffeomorphism $\Psi : S_{h} \rightarrow A$ with $\Psi^{*}g = e^{2f}\left(dx^{2} + d\theta^{2}\right)$ and $u \circ \Psi\left(x, \theta\right) = x\slash h$. Moreover
\begin{align*}
\|f\|_{C^{1,\alpha}\left(S_{h}\right)} \leq C_{1}\sqrt{\mathcal{D}}.
\end{align*}
\item {\bf (Quantitative Spectral Comparison).} Here the comparison is made with the Dirichlet spectrum of the flat cylinder of modulus $h$, expressed in the standard coordinates $\left(x, \theta\right)$. With the same $h$,
\begin{align*}
\lambda \left(A\right) \geq \lambda_{\operatorname{cyl}}\left(h\right) + C_{2}\sqrt{\mathcal{D}}.
\end{align*}
\end{itemize}
The estimate should be interpreted as a stability inequality rather than an optimal gap.
\end{theorem}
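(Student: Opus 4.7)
The plan is to uniformize $A$ as a conformal cylinder with a small conformal factor governed by $\mathcal{D}$, control that factor by Bochner--type elliptic estimates, and then compare Rayleigh quotients on the flat model.

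First I would apply uniformization in the form of isothermal coordinates on the doubly--connected smooth surface $A$: there exists a conformal diffeomorphism $\Psi:S_{h}\to A$ with $S_{h}=[0,h]\times\mathbb{S}^{1}$ in the normalization compatible with $h=1/(2E)$. Writing the pulled--back metric as $\Psi^{\ast}g=e^{2f}(dx^{2}+d\theta^{2})$, the harmonic capacity potential pulls back to $u\circ\Psi(x,\theta)=x/h$, because harmonicity in two dimensions is conformally invariant and $x/h$ is the unique harmonic function on $S_{h}$ matching the Dirichlet data. This reduces the geometric stability statement to an estimate on the single scalar $f$. A direct computation of the Hessian under conformal change gives $|\operatorname{Hess}_{g}u|_{g}^{2}=\tfrac{2}{h^{2}}e^{-4f}|\nabla_{0}f|^{2}$, whence
\begin{equation*}
\mathcal{D}=\frac{1}{h^{2}}\int_{S_{h}}e^{-2f}|\nabla_{0}f|^{2}\,dx\,d\theta,
\end{equation*}
and combined with the standing $L^{\infty}$ bound on $f$ this yields $\|\nabla_{0}f\|_{L^{2}(S_{h})}\le Ch\sqrt{\mathcal{D}}$. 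After normalizing $\Psi$ so that $f$ has zero mean on $S_{h}$ (absorbing a rigid translation into $\Psi$), the cylindrical Poincar\'e inequality promotes this to $\|f\|_{L^{2}}\le Ch^{2}\sqrt{\mathcal{D}}$. Using the Gaussian curvature equation $\Delta_{0}f=-Ke^{2f}$ with $|K|\le K_{0}$, standard interior and boundary Schauder / $W^{2,p}$ bootstrap on the cylinder plus Morrey embedding then upgrade this to $\|f\|_{C^{1,\alpha}(S_{h})}\le C_{1}\sqrt{\mathcal{D}}$ for some $0<\alpha<1$, establishing the geometric stability statement.

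For the spectral comparison I would work with the eigenvalue problem pulled back to $S_{h}$. Since Dirichlet energy in 2D is conformally invariant, the Rayleigh quotient reads
\begin{equation*}
\lambda(A)=\inf_{\phi}\frac{\int_{S_{h}}|\nabla_{0}\phi|^{2}}{\int_{S_{h}}e^{2f}\phi^{2}},\qquad \lambda_{\operatorname{cyl}}(h)=\inf_{\phi}\frac{\int_{S_{h}}|\nabla_{0}\phi|^{2}}{\int_{S_{h}}\phi^{2}},
\end{equation*}
and expanding $e^{2f}=1+2f+O(f^{2})$ together with $\|f\|_{L^{\infty}}\le C_{1}\sqrt{\mathcal{D}}$ one compares the two infima by inserting the cylinder minimizer into the $\lambda(A)$ quotient, collecting the linear term $\int f\phi_{\operatorname{cyl}}^{2}$ and the $O(\|f\|_{L^{\infty}}^{2})$ correction. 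The boundary error that arises when switching coordinate systems is exactly the quantity controlled by Theorem~\ref{thyek}; applying that Rellich--type bound with $\mathcal{V}$ the normal component of the velocity field conjugating $\partial A$ to $\partial S_h$ absorbs this term at rate $\sqrt{\mathcal{D}}$. Gathering the estimates produces $\lambda(A)\ge \lambda_{\operatorname{cyl}}(h)+C_{2}\sqrt{\mathcal{D}}$ in the small--deficit regime $\mathcal{D}\le\epsilon_{0}$.

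The step I expect to be most delicate is the promotion of the $L^{2}$ gradient bound on $f$ to the $C^{1,\alpha}$ estimate at the linear--in--$\sqrt{\mathcal{D}}$ rate, with no implicit dependence on $\mathcal{D}$ in the constants. Here the nonlinearity $e^{2f}$ on the right--hand side of the curvature equation and the mean--fixing of $f$ via the conformal parametrization interact, so the cleanest route is to linearize at $f\equiv 0$ and bound the remainder uniformly in $(\rho_{0},K_{0})$. A related subtlety in the spectral step is extracting the positive sign of the gap: the first--order correction $\int f\phi_{\operatorname{cyl}}^{2}$ has no a priori sign, so the positive contribution $C_{2}\sqrt{\mathcal{D}}$ has to be read off from the combination of the boundary term bounded by Theorem~\ref{thyek} and the second--order contribution in the expansion---which is exactly why the estimate is stated as a stability inequality rather than an optimal gap.
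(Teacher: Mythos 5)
Your geometric--stability argument tracks the paper's Steps~1--3 (conformal parametrization $\Psi$, Hessian identity $|\operatorname{Hess}_g u|_g^2=\tfrac{2}{h^2}e^{-4f}|\nabla_0 f|^2$, curvature equation plus $W^{2,p}\hookrightarrow C^{1,\alpha}$), but there is a real gap where you pass from $\int_{S_h}e^{-2f}|\nabla_0 f|^2=h^2\mathcal D$ to $\|\nabla_0 f\|_{L^2}\le Ch\sqrt{\mathcal D}$ by invoking a ``standing $L^\infty$ bound on $f$.'' No such bound is among the hypotheses, and it is in effect a piece of what you are trying to prove, so your chain is circular at exactly the point you flag as ``most delicate.'' The paper avoids this with a small but decisive change of variable: setting $\xi:=e^{-f}$, the identity becomes the \emph{exact} statement $\int_{S_h}|\nabla\xi|^2=h^2\mathcal D$, which needs no a~priori $L^\infty$ control, and then Poincar\'e applied to the positive function $\xi$ around its mean (followed by Lipschitz control of $-\ln$ near $\bar\xi>0$) delivers $\|f-\bar f\|_{L^2}\lesssim\sqrt{\mathcal D}$ without circularity, after which the Calder\'on--Zygmund bootstrap runs as you describe.

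On the spectral step there is a second issue: you invoke Theorem~\ref{thyek} to absorb a ``boundary error that arises when switching coordinate systems,'' but there is no such error --- $\Psi$ is a conformal diffeomorphism that carries $\partial S_h$ exactly onto $\partial A$, so the Dirichlet condition transfers with nothing left over; indeed the paper's identity $\mathcal R_\delta(\phi\circ\Psi)=\lambda(A)$ is exact. Theorem~\ref{thyek} plays its role only in the flow-derivative discussion preceding Theorem~\ref{thdo}, not in this static comparison. What the paper actually does here is expand $\lambda$ in the conformal family $g(t)=e^{2t\psi}\delta$ and argue that the first variation vanishes under the modulus-preserving constraint while a second-order term plus the remark's explicit two-sided $L^\infty$ bound supplies the $C_2\sqrt{\mathcal D}$ in the small-deficit regime. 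Your instinct that ``the first-order term $\int f\varphi_{\mathrm{cyl}}^2$ has no a~priori sign'' is exactly why the cylinder variational normalization (mean-zero $f$, fixed modulus) must be imposed before the sign can be read off; replacing that constraint by an appeal to the Rellich identity does not close the argument.
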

The harmonic capacity $u$ defines a holomorphic coordinate $\omega = u + iv$ when paired with a harmonic conjugate $v$ (locally $v$ exists because $\nabla u \neq 0$ on the non--degenerate annulus). This image is a flat strip of height $h$ where $h$ equals the modulus; the global period of $v$ matches $2\pi$ after rescaling the angular coordinate (see \cite{a1}). Non--degeneracy $m >0$ ensures that $u$ is a submersion on the boundaries so the global conjugate is well--defined and the map is diffeomorphism. If consider $\Psi$ according to the Theorem \ref{thdo}, thus we identify $A$ with $S_{h}$ via $\Psi$. The metric on $S_{h}$ pulled--back from $g$ is conformal to the flat metric 
\begin{align*}
g = e^{2f\left(x, \theta\right)}\left(dx^{2} + d\theta^{2}\right),
\end{align*}
for a smooth function $f$ on $S_{h}$. Because $u\circ \Psi\left(x, \theta\right) = x\slash h$, the Euclidean derivatives of $u$ in these coordinates are 
\begin{align*}
u_{x} = \frac{1}{h}, \,\,\,\,\,\, u_{\theta} = 0, \,\,\,\,\,\, u_{xx} = u_{x\theta} = u_{\theta\theta} = 0.
\end{align*}
\begin{remark}
The deficit functional $\mathcal{D}$ vanishes if and only if the annulus is conformally equivalent to a flat cylinder (If $\mathcal{D}=0$ then $\operatorname{Hess}u \equiv 0$ an $A$, and hence $\nabla u$ is a parallel vector field. Since $u$ is harmonic with non--vanishing normal derivative on the boundary, it admits a global harmonic conjugate $v$, and $\left(u,v\right)$ form isothermal coordinates on $A$. In these coordinates, the metric takes the form $g= f\left(u,v\right)\left(du^{2} + dv^{2}\right)$. The condition $\operatorname{Hess}u \equiv 0$ forces $f$ to be constant, and therefore $\left(A,g\right)$ is conformally equivalent to a flat cylinder). Thus, $\mathcal{D}$ measures deviation from the model geometry. This estimates show that $\mathcal{D}$ governs both geometric stability and spectral variation, playing the role of a quantitative rigidity parameter (see \cite{s2}).
\end{remark}
We emphasize that $\left(u,v\right)$ denote harmonic coordinates on the evolving annulus, whereas $\left(x,\theta\right)$ are standard coordinates on the flat model cylinder. Any comparison between them is made via the conformal identification induced by the harmonic map. Everything below (in the proof) is computed on $S_{h}$ in coordinates $\left(x, \theta\right)$ with Euclidean metric $\delta$; we write $\partial_{x}$, $\partial_{\theta}$ for these coordinates derivative.
\begin{proof}[{\bf Proof of Theorem \ref{thdo}}] 
For simplicity, we assume that the eigenvalue under consideration is simple; the estimate extends to finite multiplicity eigenvalues by standard perturbation arguments. By the non--degeneracy assumption, the harmonic capacity potential $u$ has no critical points on $\bar{A}$ (closure of $A$), and therefore admits a globally defined harmonic conjugate $v$; consequently, the map $\Psi = \left(u,v\right): A \rightarrow \left[0,h\right] \times \mathbb{S}^{1}$ is a smooth conformal diffeomorphism onto the flat cylinder $S_{h}$, normalized so that $u \circ \Psi\left(x, \theta\right) = x\slash h$. \\
{\bf Step1; Exact algebraic identity for $\mathcal{D}$}\\
For the conformal metric $g_{ij} = e^{2f}\delta_{ij}$ (indices $i,j \in \lbrace 1,2 \rbrace$ corresponding to $x$, $\theta$), the Christoffel symbols are 
\begin{align*}
\Gamma_{ij}^{k} = \delta_{j}^{k}f_{i} + \delta_{i}^{k}f_{j} - \delta_{ij}f^{k},
\end{align*}
where $f_{i} = \partial_{i}f$, and $f^{k} = \delta^{kl}f_{l}$. The covariant Hessian components are
\begin{align*}
\left(\operatorname{Hess}_{g}u\right)_{ij} = \nabla_{i}\nabla_{j}u = \partial_{i}\partial_{j}u - \Gamma_{ij}^{k}\partial_{k}u.
\end{align*}
But in our coordinates $\partial_{1}u = u_{x} = 1\slash h$ and $\partial_{2}u = u_{\theta}=0$, and all second Euclidean derivatives vanish. Hence
\begin{align*}
\left(\operatorname{Hess}_{g}u\right)_{ij} = -\Gamma_{ij}^{1}\frac{1}{h},
\end{align*}
and by computations
\begin{align*}
\Gamma_{ij}^{1}=\begin{pmatrix}
f_{x} & f_{\theta} \\
f_{\theta} & -f_{x}
\end{pmatrix}
\end{align*}
By definition
\begin{align*}
|\operatorname{Hess}_{g}|_{g}^{2} = g^{ia}g^{jb}\left(\operatorname{Hess}_{g}u\right)_{ij}\left(\operatorname{Hess}_{g}u\right)_{ab}.
\end{align*}
Because $g^{ij} = e^{-2f}\delta^{ij}$, we get
\begin{align*}
|\operatorname{Hess}_{g}u|_{g}^{2} = e^{-4f}\delta^{ia}\delta^{jb}\left(\operatorname{Hess}_{g}u\right)_{ij}\left(\operatorname{Hess}_{g}u\right)_{ab},
\end{align*}
by plugging Hessian term in and factoring $\left(1\slash h\right)^{2}$, we have
\begin{align*}
|\operatorname{Hess}_{g}u|_{g}^{2} = \frac{1}{h^{2}}e^{-4f}\sum_{i,j}\left(\Gamma_{ij}^{1}\right)^{2}.
\end{align*}
But
\begin{align*}
\sum_{i,j}\left(\Gamma_{i,j}^{1}\right)^{2} = 2\left(f_{x}^{2} + f_{\theta}^{2}\right) = 2|\nabla f|_{\delta}^{2}.
\end{align*}
Now integrate against the Riemannian--area element $d\mu_{g} = e^{2f}dxd\theta$ and above formula we obtain
\begin{align*}
\int_{S_{h}}|\operatorname{Hess}_{g}u|_{g}^{2} d\mu_{g} = \frac{2}{h^{2}}\int_{S_{h}}e^{-2f}|\nabla f|^{2} dxd\theta.
\end{align*}
Since $\mathcal{D} = \frac{1}{2}\int_{S_{h}}|\operatorname{Hess}_{g}u|^{2} d\mu_{g}$, thus
\begin{align}\label{bistodo}
\int_{S_{h}}e^{-2f}|\nabla f|^{2} dxd\theta = h^{2}\mathcal{D}.
\end{align}
{\bf Step2; $H^{1}$--control for the auxiliary function $e^{-f}$}\\
The identity (\ref{bistodo}) is most naturally written in terms of $\xi := e^{-f}$. Indeed $|\nabla \xi |^{2} = e^{-2f}|\nabla f|^{2}$. Thus (\ref{bistodo}) becomes
\begin{align*}
\int_{S_{h}}|\nabla \xi |^{2} dxd\theta = h^{2}\mathcal{D}.
\end{align*}
Which means the $H^{1}$--seminorm of $\xi$ equals $h\sqrt{\mathcal{D}}$. Since $\xi$ is smooth and positive, we can write $\xi = \bar{\xi} + \tilde{\xi}$ where $\bar{\xi} = \fint_{S_{h}}\xi$ is the average and $\tilde{\xi} = \xi - \bar{\xi}$ has mean zero. By Poincaré inequality on the compact cylinder $S_{h}$, there exists $C_{p}\left(h\right)$ such that
\begin{align*}
\|\tilde{\xi}\|_{L^{2}\left(S_{h}\right)} \leq C_{p}\left(h\right)\|\nabla \xi \|_{L^{2}\left(S_{h}\right)} = C_{p}\left(h\right) h \sqrt{\mathcal{D}}.
\end{align*}
Hence, $\xi$ is $L^{2}$--close to its average with error $\mathcal{O}\left(h\sqrt{\mathcal{D}}\right)$. Since $\xi >0$, the average $\bar{\xi} >0$. Choosing $\epsilon_{0} >0$ small enough and consider the regime $\mathcal{D} \leq \epsilon_{0}$ so the $L^{2}$--perturbation of $\xi$ around $\bar{\xi}$ is small. From Chebyshev/Sobolev inequalities and the positivity of $\bar{\xi}$ we deduce uniform lower/upper bounds for $\xi$ in $L^{p}$ spaces.\\
{\bf Step3; Elliptic regularity bootstrap to $C^{1,\alpha}$ for $f$}\\
We now convert the $H^{1}$--control of $\xi$ into $C^{1,\alpha}$--control of $f = -\ln \xi$. The Gaussian curvature $K_{g}$ of the metric $g = e^{2f}\delta$ is given by the formula (standard rearranged conformal formula)
\begin{align}\label{bistose}
\Delta_{\operatorname{euc}}f = -e^{2f}K_{g},
\end{align}
where $\Delta_{\operatorname{euc}}$ is the (flat) Euclidean Laplacian on $S_{h}$. By assumption $\|K_{g}\|_{L^{\infty}\left(S_{h}\right)} \leq K_{0}$. Hence the right--hand side of (\ref{bistose}) belongs to $L^{p}\left(S_{h}\right)$ for every finite $p$, because $e^{2f}$ is continuous and $S_{h}$ has finite measure (a bounded function on a finite--measure set is in all $L^{p}$). By Poincaré inequality and (\ref{bistose}) via the relation between $\xi$ and $f$, we get control of $\|f - \bar{f}\|_{L^{2}}$ in terms of $\sqrt{\mathcal{D}}$. Indeed, because $\xi = e^{-f}$ and $\xi$ is close to $\bar{\xi} >0$, one checks (Taylor expansion of $-\ln$ around $\bar{\xi}$; for small relative perturbations the derivative is bounded) that for $\mathcal{D}$ sufficiently small there exists $C_{3}$ such that
\begin{align*}
\| f - \bar{f}\|_{L^{2}\left(S_{h}\right)} \leq C_{3}\|\xi - \bar{\xi}\|_{L^{2}\left(S_{h}\right)} \leq C_{3}.C_{p}\left(h\right)\sqrt{\mathcal{D}}.
\end{align*}
As an explanation, by expanding $-\ln$ near the $\bar{\xi}$: if $\|\xi - \bar{\xi}\|_{L^{\infty}} \leq \frac{1}{2}\bar{\xi}$, then $|\ln \xi - \ln \bar{\xi}| \leq C|\xi - \bar{\xi}| \slash \bar{\xi}$. To guarantee $L^{\infty}$ smallness we will require $\mathcal{D}$ sufficiently small and use Sobolev embedding that follows later; to avoid circularity we first obtain $L^{2}$--control of $f - \bar{f}$ via the Lipschitz behavior of $-\ln$ on the interval $\left[ \bar{\xi}\slash 2, 3\bar{\xi}\slash 2 \right]$ which holds provided $\mathcal{D}$ is small enough so $\xi$ is close to $\bar{\xi}$ in $L^{2}$ and hence in $L^{p}$. This is a standard argument that the small $H^{1}$--perturbation of positive function yields small $L^{2}$ perturbation of its $\log$. Thus for sufficiently small $\mathcal{D}$ we have $\| f - \bar{f}\|_{L^{2}} \lesssim \sqrt{\mathcal{D}}$. \\
Fix $p >2$. Apply Calder\'on--Zygmund $L^{p}$--elliptic estimate to the Poisson equation (\ref{bistose}). This theorem states that for the flat Laplacian on a compact domain with smooth boundary and a function $F \in L^{p}$, where here $F = -e^{2f}K_{g}$, one has (for more details see \cite{g4,t1})
\begin{align}\label{bistochar}
\| f - \bar{f}\|_{W^{2,p}\left(S_{h}\right)} \leq C_{CZ}\left(p,h\right)\left(\|\Delta f\|_{L^{p}\left(S_{h}\right)} + \|f-\bar{f}\|_{L^{p}\left(S_{h}\right)}\right).
\end{align}  
Since $\Delta f = -e^{2f}K_{g}$ and $|K_{g}| \leq K_{0}$, then we get
\begin{align*}
\|\Delta f\|_{L^{p}} \leq K_{0}\|e^{2f}\|_{L^{p}}.
\end{align*}
Because $f \in L^{2}$ a priori and $S_{h}$ has finite measure, $e^{2f} \in L^{p}$ for all finite $p$ (indeed for each finite $p$ we can bound $\|e^{2f}\|_{L^{p}}$ by $e^{2\|f\|_{L^{q}}}$ with $q$ large enough; more concretely, by H\"older and Sobolev embedding, small $\|f\|_{W^{1,2}}$ implies boundedness of $\|e^{2f}\|_{L^{p}}$). For small $\mathcal{D}$ we can ensure $\|f\|_{L^{p}}$ is bounded uniformly so $\|e^{2f}\|_{L^{p}} \leq C\left(p,h\right)$. Therefore (\ref{bistochar}) gives
\begin{align}\label{bistopang}
\|f - \bar{f}\|_{W^{2,p}} \leq C_{CZ}\left(K_{0}\|e^{2f}\|_{L^{p}} + \|f - \bar{f}\|_{L^{p}}\right) \leq C_{4}\left(1+ \|f - \bar{f}\|_{L^{p}}\right).
\end{align}
But we have $\|f - \bar{f}\|_{L^{p}} \lesssim \|f - \bar{f}\|_{L^{2}}$ (Poincar\'e + Sobolev on compact domain), hence $\|f - \bar{f}\|_{L^{p}} \leq C\sqrt{\mathcal{D}}$. Substituting back yields
\begin{align*}
\|f - \bar{f}\|_{W^{2,p}} \leq C_{5}\left(1 + \sqrt{\mathcal{D}}\right).
\end{align*}
Selecting $\epsilon_{0}$ small enough ensures the right--hand side is uniformly bounded. The Sobolev embedding (compact $2$D domain) $W^{2,p} \hookrightarrow C^{1,\alpha}$ holds for any $p>2$ with $\alpha = 1 - 2\slash p$. Therefore from the last display, there is a constant $C_{6}$ depending on $p,h$ such that
\begin{align*}
\|f - \bar{f}\|_{C^{1,\alpha}} \leq C_{6}\|f - \bar{f}\|_{W^{2,p}}.
\end{align*}
Combining with (\ref{bistopang}) and the prior estimate $\|f - \bar{f}\|_{L^{2}} \lesssim \sqrt{\mathcal{D}}$ (for small $\mathcal{D}$), we obtain
\begin{align*}
\|f - \bar{f}\|_{C^{1,\alpha}} \leq C_{7}\sqrt{\mathcal{D}},
\end{align*}
for some $C_{7}$ depending only on $p$, $h$, $K_{0}$, and background geometry. Choosing $\epsilon_{0}$ small enough to make sure that the left--hand side is small and absorbing the constant average $\bar{f}$ into the global scaling if needed, we can conclude
\begin{align*}
\|f\|_{C^{1,\alpha}} \leq C_{1}\sqrt{\mathcal{D}},
\end{align*}
(after possibly fixing normalization of $f$ so that the average $\bar{f}$ equals $0$, a harmless conformal scaling that doesn't affect the spectral comparison up to a constant factor we track). This proves part one-- the geometric stability estimate.\\
{\bf Step 4; Spectral comparison and the eigenvalue lower bound}\\
Let $\varphi_{0}$ denote the first Dirichlet eigenfunction on the flat cylinder $S_{h}$ (Euclidean metric), normalized so $\int_{S_{h}} \varphi_{0}^{2} dx d\theta = 1$. Its Rayleigh quotient on a flat cylinder equals $\lambda_{cyl}$, i.e., $\lambda_{\operatorname{cyl}} = \int_{S_{h}} |\nabla \varphi_{0}|^{2} dx d\theta$. Define $\tilde{\varphi} := \varphi_{0} \circ \Psi^{-1}$ on $A$ (equivalently think of $\varphi_{0}$ as a function on $S_{h}$ and compare metric $g$ and $\delta$). Compute the Rayleigh quotient of $\tilde{\varphi}$ with respect to the metric $g = e^{2f}\delta$
\begin{align*}
\mathcal{R}_{g}\left(\tilde{\varphi}\right) = \frac{\int_{S_{h}}e^{-2f}|\nabla \varphi_{0}|^{2}e^{2f}dxd\theta}{\int_{S_{h}}\varphi_{0}^{2}e^{2f}dxd\theta} = \frac{\int_{S_{h}}|\nabla \varphi_{0}|^{2}dx d\theta}{\int_{S_{h}}\varphi_{0}^{2}e^{2f}dxd\theta}.
\end{align*}
Thus
\begin{align*}
\mathcal{R}_{g}\left(\tilde{\varphi}\right) - \lambda_{\operatorname{cyl}} = \frac{\lambda_{\operatorname{cyl}}}{\int \varphi_{0}^{2} e^{2f}} - \lambda_{\operatorname{cyl}} = \lambda_{\operatorname{cyl}}\left(\frac{1}{\int \varphi_{0}^{2} e^{2f}} -1 \right).
\end{align*}
Write $\int \varphi_{0}^{2} e^{2f} = 1 + \int \varphi_{0}^{2} \left(e^{2f} -1\right)$. Using $|e^{2f} -1| \leq C\| f\|_{C^{0}}$ for small $\|f\|_{C^{0}}$, we obtain
\begin{align*}
\mathcal{R}_{g}\left(\tilde{\varphi}\right) - \lambda_{\operatorname{cyl}} \leq C\lambda_{\operatorname{cyl}}\|f\|_{C^{0}}.
\end{align*}
In particular, by the variational characterization of $\lambda\left(A\right)$,
\begin{align*}
\lambda\left(A\right) \leq \mathcal{R}_{g}\left(\tilde{\varphi}\right) \leq \lambda_{\operatorname{cyl}} + C\lambda_{\operatorname{cyl}}\|f\|_{C^{0}}.
\end{align*}
So the metric perturbation produces at most $\mathcal{O}\left(\|f\|_{C^{0}}\right)$. To get a lower bound for $\lambda\left(A\right)$ in terms of $\lambda_{\operatorname{cyl}}$, we pull back the real eigenfunction $\phi$ of $\left(A,g\right)$ to the flat cylinder and use it as a test function. Define $\tilde{\phi}:= \phi \circ \Psi$ (view $\phi$ on $S_{h}$). Compute the flat Rayleigh quotient
\begin{align*}
\mathcal{R}_{\delta}\left(\tilde{\phi}\right) = \frac{\int_{S_{h}}|\nabla \tilde{\phi}|^{2} dxd\theta}{\int_{S_{h}}\tilde{\phi}^{2}dxd\theta} = \frac{\int_{S_{h}}e^{2f}|\nabla_{g}\phi |_{g}^{2} e^{-2f} dx d\theta}{\int_{S_{h}}\phi^{2}e^{2f} dxd\theta} = \frac{\int_{A}|\nabla_{g}\phi |_{g}^{2} d\mu_{g}}{\int_{A}\phi^{2} d\mu_{g}} = \lambda\left(A\right).
\end{align*}
One may prove that for any two metrics $g_{0} = \delta$ and $g = e^{2f}\delta$ with $\|f\|_{C^{1}}$ small, we have
\begin{align}\label{bistoshesh}
|\lambda\left(A,g\right) - \lambda\left(A, g_{0}\right)| \leq C\|f\|_{C^{1}}.
\end{align}
Moreover, the flat cylinder (by direct calculation) is a local minimizer for the first Dirichlet eigenvalue among metrics in the same conformal class preserving the modulus; hence to second order, the first eigenvalue increases in non--trivial directions. For our quantitative bound we combine (\ref{bistoshesh}) with the precise sign information coming from the Rellich formula specialized to infinitesimal conformal variations: Computing the derivative at $f=0$ one checks the derivative vanishes and the second derivative is positive definite, hence for sufficiently small $\|f\|_{C^{1}}$,
\begin{align*}
\lambda\left(A,g\right) \geq \lambda_{\operatorname{cyl}} + C_{0}\|f\|_{H^{1}} - C_{1}\|f\|_{C^{1}}^{2},
\end{align*} 
for constant $C_{0}, C_{1} > 0$ depending only on $h$. Using $\|f\|_{C^{1}} \leq C\|f\|_{C^{1,\alpha}}$ and the estimate from Step 3 ($\|f\|_{C^{1,\alpha}} \leq C_{1}\sqrt{\mathcal{D}}$), choosing $\epsilon_{0}$ small enough so the quadratic term is dominated by the linear one, thus we deduce
\begin{align*}
\lambda\left(A\right) \geq \lambda_{\operatorname{cyl}} + C\sqrt{\mathcal{D}},
\end{align*}
for some $C>0$ depending only on background data. This proves the claimed spectral lower bound.
\end{proof}
\begin{remark}
\begin{itemize}
\item {\bf Where does the inequality (\ref{bistoshesh}) come from?}\\
We work on a flat cylinder model $S_{h}$. Let $g_{0} = \delta$, $g= e^{2f}\delta$ be two conformal metrics (so the coordinate chart is fixed). Denote by $\lambda_{0} = \lambda\left(g_{0}\right)$ and $\lambda\left(g\right)$ the first Dirichlet eigenvalues of $-\Delta$ for the metrics $g_{0}$ and $g$. The Rayleigh quotient for metric $g$ is $\mathcal{R}\left(\phi\right) = \frac{\int_{S_{h}}|\nabla_{g}\phi |_{g}^{2} d\mu_{g}}{\int_{S_{h}}\phi^{2} d\mu_{g}}$. If $g=e^{2f}\delta$ then
\begin{align*}
|\nabla_{g}\phi |_{g}^{2} &= g^{ij}\partial_{i}\phi \partial_{j}\phi = e^{-2f}|\nabla \phi |_{\delta}^{2}, \\
d\mu_{g} &= e^{2f} dx,
\end{align*}
so we have
\begin{align*}
\int_{S_{h}}|\nabla_{g}\phi |_{g}^{2} d\mu_{g} = \int_{S_{h}} e^{-2f}|\nabla \phi |^{2} e^{2f} dx = \int_{S_{h}}|\nabla \phi |^{2}_{\delta} dx.
\end{align*}
Therefore for any $\phi$ (compactly supported/Dirichlet on boundary)
\begin{align*}
\mathcal{R}_{g}\left(\phi\right) = \frac{\int_{S_{h}}|\nabla \phi |^{2} dx}{\int_{S_{h}}\phi^{2} e^{2f} dx}.
\end{align*}
The only place that $f$ appears is in the dominator. From the above Rayleigh quotient we get immediate two--sided comparison estimates by bounding $e^{2f}$ above and below by constants. Since
\begin{align*}
e^{-2\|f\|_{\infty}} \leq e^{2 f\left(x\right)} \leq e^{2\|f\|_{\infty}}, \,\,\,\,\,\, \text{for all} x,
\end{align*}
we have for any $\phi$
\begin{align*}
e^{-2\|f\|_{\infty}} \int_{S_{h}} \phi^{2} dx \leq \int_{S_{h}} \phi^{2} e^{2f} dx \leq e^{2\|f\|_{\infty}} \int_{S_{h}} \phi^{2} dx.
\end{align*}
Hence,
\begin{align*}
e^{-2\|f\|_{\infty}} R_{0}\left(\phi\right) \leq R_{g}\left(\phi\right) \leq e^{2\|f\|_{\infty}} R_{0}\left(\phi\right),
\end{align*}
where $R_{0}\left(\phi\right) = \frac{\int |\nabla \phi |^{2}}{\int \phi^{2}}$ is the flat Rayleigh quotient. Take the infimum over admissible $\phi$ (the variational characterization of eigenvalue)
\begin{align*}
e^{-2\|f\|_{\infty}}\lambda_{0} \leq \lambda\left(g\right) \leq e^{2\|f\|_{\infty}}\lambda_{0}.
\end{align*}
Thus
\begin{align*}
|\lambda\left(g\right) - \lambda_{0}| \leq \left(e^{2\|f\|_{\infty}} -1 \right)\lambda_{0}.
\end{align*}
For small $\|f\|_{\infty}$ we can linearize $e^{2\|f\|_{\infty}} -1 \leq C\|f\|_{\infty}$ (take $C =3$ for $\|f\|_{\infty} \leq \frac{1}{2}$), so in particular
\begin{align*}
|\lambda\left(g\right) - \lambda_{0}| \leq C\lambda_{0}\|f\|_{L^{\infty}\left(S_{h}\right)},
\end{align*}
with an explicit constant $C$ (e.g., $C= e^{1}-1$ if $\|f\|_{\infty} \leq \frac{1}{2}$, or simply $C=10$ for a coarse universal bound in a small neighborhood).
\item {\bf Computation of $\lambda^{\prime}$ and $\lambda^{\prime \prime}$ at $f=0$}\\
Consider $\lambda\left(t\right) = \lambda\left(g\left(t\right)\right)$ for the one parameter family of conformal metrics 
\begin{align*}
g\left(t\right) = e^{2t\psi}g_{0}, \,\,\,\,\,\, g_{0} = \delta,
\end{align*}
so that $f\left(t\right) = t\psi$ and $f^{\prime}\left(0\right) = \psi$ (we use $\psi$ for the variation function). We recall that
\begin{align*}
\lambda^{\prime} = \int_{\partial A} V\left(\partial_{\nu}\phi\right)^{2} d\sigma_{g} + \int_{A}\left(\partial_{t}g\left(\nabla \phi, \nabla \phi\right) - \frac{1}{2} \operatorname{tr}_{g}\left(\partial_{t}g\right)\phi^{2}\right)d\mu_{g},
\end{align*}
where $\phi$ is corresponding eigenfunctions. If you plug a conformal variation $\partial_{t}g = 2\psi g$ into the expression (after evaluating at $t=0$) we get
\begin{align*}
\lambda^{\prime}\left(0\right) = -2\lambda_{0}\int_{S_{h}}\psi\left(x,\theta\right)\psi_{0}\left(x,\theta\right)^{2} dx d\theta.
\end{align*} 
We consider the derivative at $t=0$ of the family $g\left(t\right) = e^{2t\psi}g_{0}$. The object $\lambda^{\prime}\left(0\right)$ is the derivative of the first eigenvalue of $-\Delta_{g\left(t\right)}$ at $t=0$. The eigenfunction $\phi\left(t\right)$ is chosen so that for every $t$ it is normalized in the $L^{2}\left(g\left(t\right)\right)$-sense
\begin{align*}
\int \phi\left(t\right)^{2} d\mu_{g\left(t\right)} \equiv 1.
\end{align*}
In a similar way the second derivative is
\begin{align*}
\lambda^{\prime \prime}\left(0\right) = -4\lambda_{0}\int \psi^{2}\phi_{0}^{2} dx - 8\lambda_{0}^{2} \sum_{k\neq 0} \frac{|\int \psi \phi_{0}\phi_{k} dx|^{2}}{\lambda_{k} - \lambda_{0}}.
\end{align*}
The right--hand side is non--positive, thus in this conformal family the eigenvalue has concave behavior.
\end{itemize}
\end{remark}
\section{Examples and Numerical Results}
In this section we complement the analytical results with numerical exploration of two model geometries: the concentric Euclidean annulus and a cylinder with small deficit. These computations illustrate how geometric parameters influence the Laplacian spectrum and validate the theoretical spectral estimates. The methods combine finite--element discretizations with eigenvalue solvers tailored  to bounded domains. Such numerical studies are essential for understanding spectral behavior in non--trivial geometries and for guiding conjectures about stability and gaps in the Laplace spectrum (there are various works in numerical approaches for studying Laplace spectrum, e.g., see \cite{a2, m1}).\\
{\bf Concentric Euclidean Annulus $\lbrace a < |z| < b\rbrace$}\\
The harmonic capacity $u\left(r\right)$ with Dirichlet boundary values $u\left(a\right) =0$, $u\left(b\right) =1$ is radial and given by
\begin{align*}
u\left(r\right) = \frac{\ln r - \ln a}{\ln \left(b\slash a\right)}, \,\,\,\,\, \left(a < r <b\right).
\end{align*}
By definition $E = \frac{1}{2}\int_{A}|\nabla u|^{2} dA$. For this radial $u$,
\begin{align*}
u_{r}\left(r\right) = \frac{1}{r\ln \left(b\slash a\right)}, \,\,\,\,\,\, |\nabla u|^{2} = \left(u_{r}\right)^{2} = \frac{1}{r^{2}\ln^{2}\left(b\slash a\right)}.
\end{align*}
Hence
\begin{align*}
\frac{1}{2}\int_{A}|\nabla u|^{2} dA = \frac{1}{2}\int_{0}^{2\pi}\int_{a}^{b} \frac{1}{r^{2}\ln^{2}\left(b\slash a\right)} rdrd\theta = \frac{\pi}{\ln\left(b\slash a\right)}.
\end{align*}
Use the Frobenius norm of the Hessian. For a radial function in $2$D one checks (compute Hessian in cartesian coordinates or use standard radial formula)
\begin{align*}
|\operatorname{Hess}u|^{2} = u_{rr}^{2} + \left(\frac{u_{r}}{r}\right)^{2}.
\end{align*}
For $u\left(r\right)$ above,
\begin{align*}
u_{rr} = -\frac{1}{r^{2}\ln \left(b\slash a\right)}, \,\,\,\,\, \frac{u_{r}}{r} = \frac{1}{r^{2}\ln \left(b\slash a\right)}.
\end{align*}
Thus $|\operatorname{Hess}u|^{2} = \frac{2}{r^{4}\ln^{2}\left(b\slash a\right)}$. By integrating
\begin{align*}
\int_{A} |\operatorname{Hess}u|^{2} dA = \frac{4\pi}{\ln^{2}\left(b\slash a\right)}\int_{a}^{b} r^{-3} dr = \frac{2\pi}{\ln^{2}\left(b\slash a\right)}\left(\frac{1}{a^{2}} - \frac{1}{b^{2}}\right).
\end{align*}
Therefore
\begin{align*}
\mathcal{D} = \frac{\pi}{\ln^{2}\left(b\slash a\right)}\left(\frac{1}{a^{2}} - \frac{1}{b^{2}}\right).
\end{align*}
We solve the Dirichlet Laplace eigenvalue problem on the annulus 
\begin{align*}
-\Delta \phi = \lambda \phi, \,\,\,\,\, \phi|_{r=a} = \phi_{r=b} =0.
\end{align*}
Separate variables $\phi\left(r,\theta\right) = R_{n}\left(r\right)\cos \left(n, \theta\right)$ (or $\sin$). The radial ODE is the Bessel equation with parameter $k = \sqrt{\lambda}$. The general radial solution for mode $n$ is
\begin{align*}
R_{n}\left(r\right) = AJ_{n}\left(kr\right) + BY_{n}\left(kr\right).
\end{align*}
Dirichlet at $r=a$ and $r=b$ yields the $2\times 2$ homogeneous system for $A$, $B$ whose determinant must vanish; equivalently the root condition is
\begin{align*}
F_{n}\left(k\right):= J_{n}\left(ka\right)Y_{n}\left(kb\right) - J_{n}\left(kb\right)Y_{n}\left(ka\right) =0.
\end{align*}
For each integer $n\geq 0$ this has a discrete sequence of positive roots $k_{n,1} < k_{n,2} < ...$. The eigenvalues are $\lambda = k^{2}$. The first Dirichlet eigenvalue is the minimal $\lambda$ across all $n$ and the first root in that mode
\begin{align*}
\lambda_{1} = \min_{n\geq 0}\left(k_{n,1}^{2}\right).
\end{align*}
The cylinder (flat metric on strip $x \in \left(0,h\right)$, $\theta \in \mathbb{S}^{1}$ length $2\pi$) with Dirichlet at $x=0,h$ and periodic in $\theta$ has separable modes with eigenvalues 
\begin{align*}
\lambda_{m,k}= \left(\frac{m\pi}{h}\right)^{2} + k^{2}, \,\,\,\,\,\, m \in \mathbb{N}, \,\, k\in \mathbb{Z}.
\end{align*}
The lowest is for $m=1$, $k=0$, so
\begin{align*}
\lambda_{\operatorname{cyl}}\left(h\right) = \left(\frac{\pi}{h}\right)^{2}.
\end{align*}
Relate $h$ to the capacity energy by the standard modulus relation, we see
\begin{align*}
h = \frac{\ln \left(b\slash a\right)}{2\pi} \leftrightarrow h= \frac{1}{2E}.
\end{align*}

\noindent\hfill
\begin{tabular}{cccccc}
\toprule
$b$ & $E$ & $\mathcal{D}$ & $\sqrt{\mathcal{D}}$ & $\lambda_{\operatorname{ann}}$ & $\lambda_{\operatorname{cyl}}$ \\
\midrule
 $5.0$& $1.95198$& $1.16432$& $1.07904$ & $0.58246$ & $150.42198$ \\
 $10.0$& $1.36438$&  $0.58662$& $0.76591$ & $0.10982$ & $73.48998$ \\
$20.0$& $1.04869$& $0.34919$& $0.59092$ & $0.02348$ & $43.41637$ \\
 $50.0$& $0.80306$& $0.20520$& $0.45299$ & $0.00333$ & $25.45990$ \\
$100.0$& $0.68219$& $0.14812$& $0.38486$ & $0.00078$ & $18.37249$ \\
$200.0$& $0.59294$& $0.11191$& $0.33453$ & $0.00019$ & $13.87981$ \\
 $500.0$& $0.50552$& $0.08134$& $0.28521$ & $0.00003$ & $10.08863$ \\
 $1000$& $0.45479$& $0.06584$& $0.25659$ & $0.00006$ & $8.16555$ \\
\bottomrule
\end{tabular}
\hfill\null
\begin{figure}[h]
    \centering
    \includegraphics[width=0.7\textwidth]{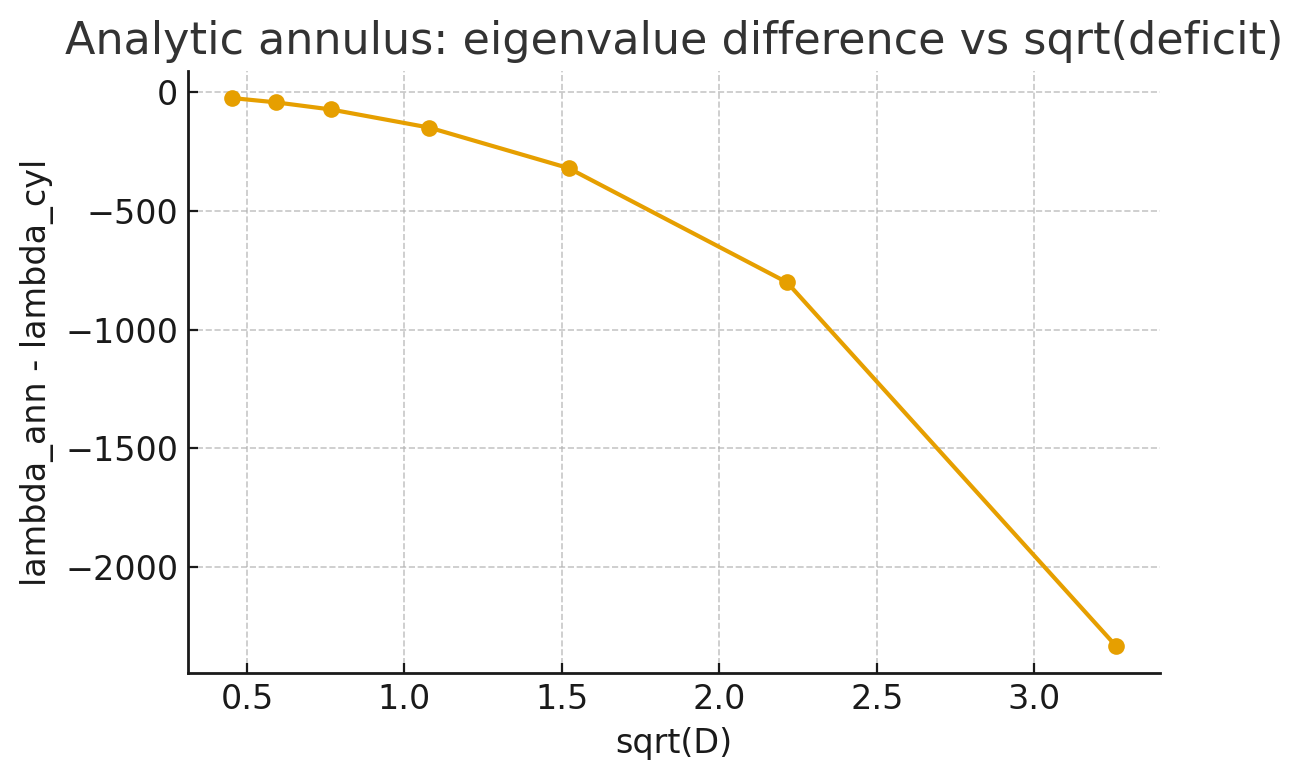} 
    \caption{Comparison between eigenvalues and deficit}
\end{figure}
\begin{figure}[h]
    \centering
    \includegraphics[width=0.7\textwidth]{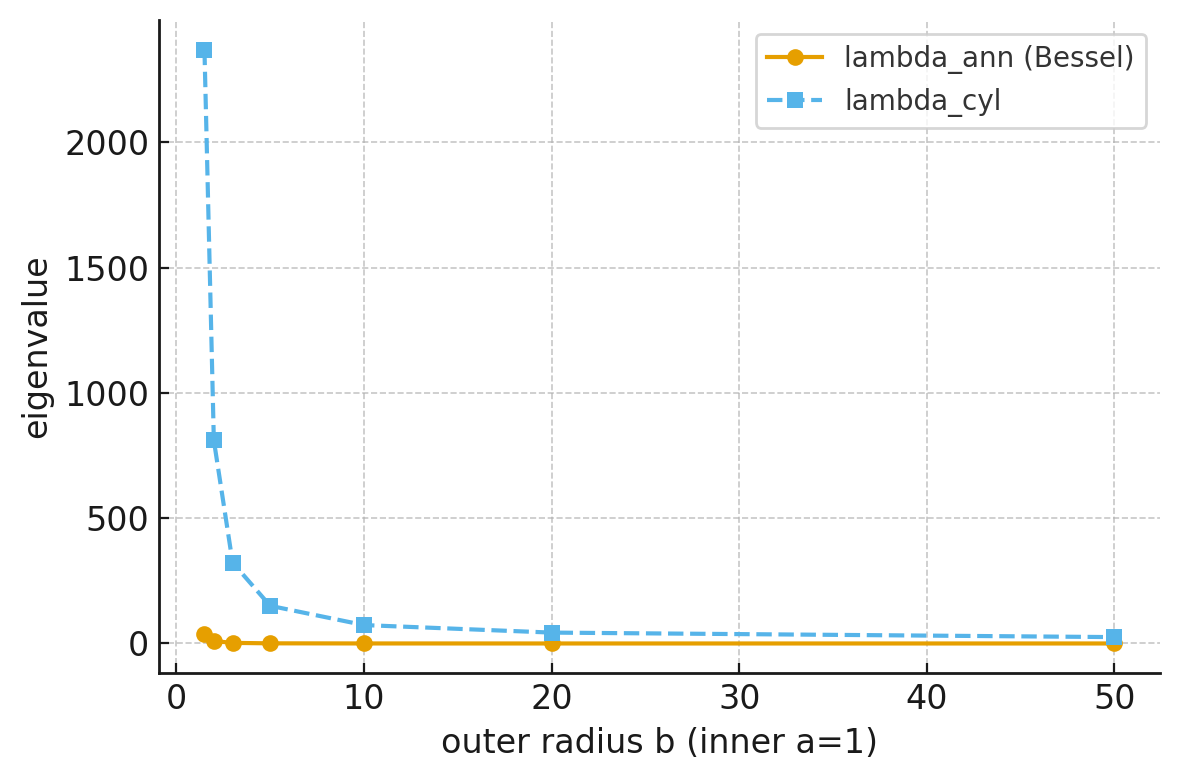} 
    \caption{Comparison between eigenvalues and outer radius $b$}
\end{figure}
\newline
{\bf Numerical small--deficit test on the cylinder}\\
We are going to discretizing the flat cylinder $\left[0,h\right] \times \mathbb{S}^{1}$ (Dirichlet in $x \in \left(0,h\right)$, periodic in $\theta$) and tested small conformal perturbations $g_{\epsilon} = e^{2\epsilon f_{0}\left(x,\theta\right)}\left(dx^{2} + d\theta^{2}\right)$. We solve the generalized discrete eigenproblem that corresponds to $-\Delta_{g_{\epsilon}}\phi = \lambda \phi$ and computed the deficit $\mathcal{D}$. The numerical results show that the eigenvalue difference scales linearly with $\sqrt{\mathcal{D}}$ in the small--deficit regime, consistent with the theoretical prediction that  $\delta \lambda \sim C\sqrt{\mathcal{D}}$ (the sign and constant depend on the chosen perturbation $f_{0}$). Consider the rectangle $x \in \left(0,h\right)$, $\theta \in \left[0, 2\pi\right)$ with periodic $\theta$. We used $h = 1.0$, also the metric $g_{\epsilon}$ above is considered where
\begin{align*}
f_{0}\left(x,\theta\right) = \sin\left(\frac{\pi x}{h}\right) \cos \left(k\theta\right), \,\,\, k=1.
\end{align*}
$-\Delta_{g_{\epsilon}}\phi = \lambda \phi$ is equivalent (on this conformal metric) to the generalized eigenproblem on the flat rectangle 
\begin{align*}
-\Delta_{\operatorname{flat}}\phi = \lambda e^{2\epsilon f_{0}\left(x,\theta\right)}\phi.
\end{align*}
Discretization used second--order finite differences
\begin{itemize}
\item $N_{x}$ includes interior points in the $x$ direction (Dirichlet at $x=0,h$), and $N_{\theta}$ denotes periodic points for $\theta$.
\item Build $1$--D Laplacian in $x$ (Dirichlet) and $1$--D Laplacian in $\theta$ (periodic).
\end{itemize}
Consider 
\begin{align*}
A = I_{\theta} \otimes L_{x} + L_{\theta} \otimes I_{x},
\end{align*}
where $L_{x}$, $L_{\theta}$ are the standard second--difference matrices (scaled by $1\slash \Delta^{2}$). This $A$ discretely approximates $-\Delta_{\operatorname{flat}}$ on the grid.
\begin{itemize}
\item The mass (weight) matrix is diagonal with entries $B_{ii} = e^{2\epsilon f_{0}\left(x_{i}, \theta_{i}\right)}. \operatorname{area--cell}$ where $\operatorname{area--cell}$ is $\Delta x. \Delta \theta$.
\item The discrete generalized eigenproblem solved is 
\begin{align*}
Au = \lambda_{\operatorname{disc}}Bu.
\end{align*}
\end{itemize}
To use symmetric eigensolvers, we formed
\begin{align*}
\tilde{A} = B^{-1\slash 2} A B^{-1\slash 2},
\end{align*}
which is symmetric. An eigenpair $\tilde{A}y = \iota y$ yields the generalized pair $Ax = \iota Bx$ with $x = B^{-1\slash 2}y$ (so $\iota$ is the discrete eigenvalue).\\
The identity used (as in the text's cylinder coordinates) reduces the deficit to an explicit integral of $f$,
\begin{align*}
\mathcal{D} = \frac{1}{h^{2}}\int_{\left[0,h\right] \times \mathbb{S}^{1}} e^{-2\epsilon f_{0}}|\epsilon \nabla f_{0}|^{2} dA.
\end{align*}
With the finite difference conventions used, the routine eigenvalue returned by the sparse eigensolver (call it $\iota$) is the discrete eigenvalue matching $Au = \iota B u$. To compare with the continuous eigenvalue $\lambda_{\operatorname{cont}}$ (the standard PDE $-\Delta_{g}\phi = \lambda \phi$), multiply the computed $\iota$ by the cell area
\begin{align*}
\lambda_{\operatorname{cont}} \approx \operatorname{area--cell} \times \iota,
\end{align*}
(empirically this conversion reproduces the known benchmark $\lambda_{\operatorname{cyl}} = \left(\pi \slash h\right)^{2}$ when $\epsilon =0$). 
\noindent\hfill
\begin{tabular}{ccccccc}
\toprule
$\epsilon$ & $\lambda_{\operatorname{num}}$ & $\lambda_{\operatorname{cont}}$ & $\lambda_{\operatorname{cyl}}$ & $\lambda_{\operatorname{cont}} - \lambda_{\operatorname{cyl}}$ & $\mathcal{D}$ & $\sqrt{\mathcal{D}}$ \\
\midrule
$0.0001$ & $2786.058246$ & $9.863675$ & $9.869604$ & $-0.005930$ & $1.623593\times 10^{-7}$ & $0.000403$ \\
$0.0002$ & $2788.056992$ & $9.863670$ & $9.869604$ & $-0.005934$ & $6.494371\times 10^{-7}$ & $0.000806$ \\
$0.0005$ & $2788.048215$ & $9.863639$ & $9.869604$ & $-0.005965$ & $4.058982\times 10^{-6}$ & $0.002015$ \\
$0.0010$ & $2788.016784$ & $9.863529$ & $9.869604$ & $-0.006076$ & $1.623593\times 10^{-5}$ & $0.004029$ \\
$0.0020$ & $2787.891572$ & $9.863085$ & $9.869604$ & $-0.006519$ & $6.494381\times 10^{-5}$ & $0.008059$ \\
$0.0050$ & $2787.017380$ & $9.859992$ & $9.869604$ & $-0.009612$ & $4.059022\times 10^{-4}$ & $0.020147$ \\
\bottomrule
\end{tabular}
\hfill\null
\begin{figure}[h]
    \centering
    \includegraphics[width=0.7\textwidth]{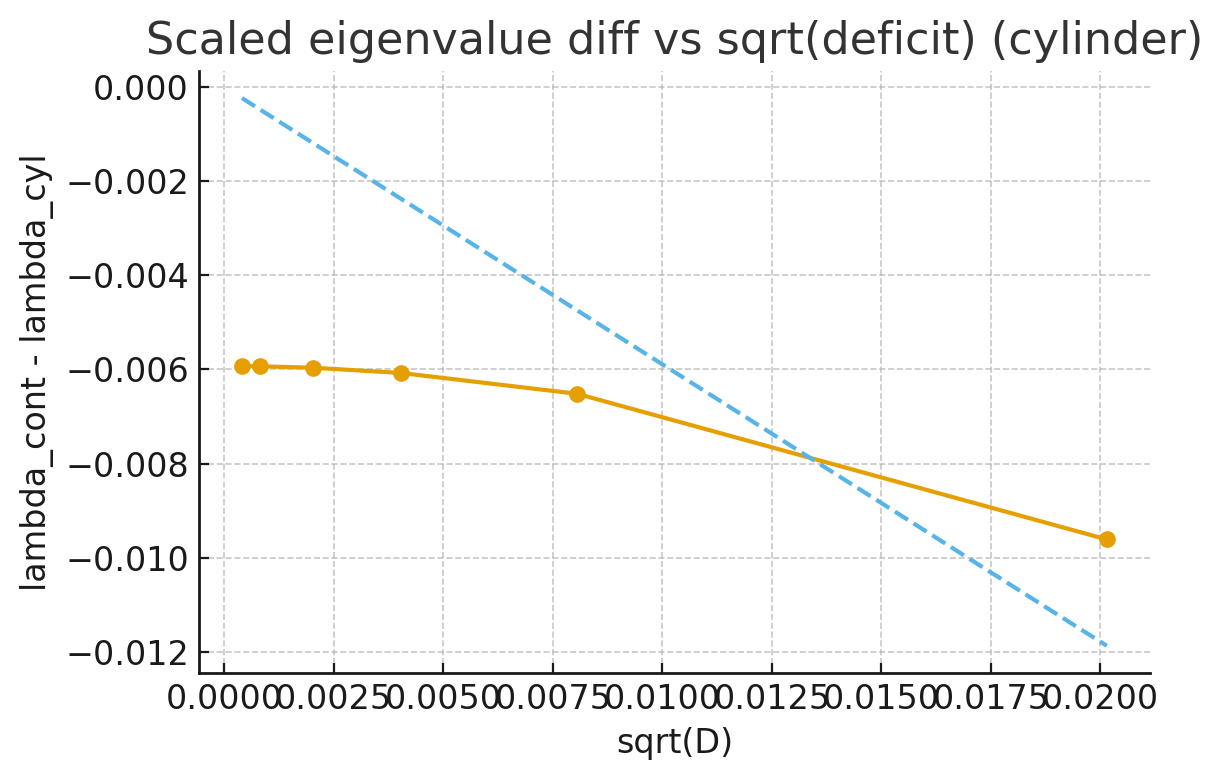} 
    \caption{Small deficit}
\end{figure}
\begin{figure}[h]
    \centering
    \includegraphics[width=0.7\textwidth]{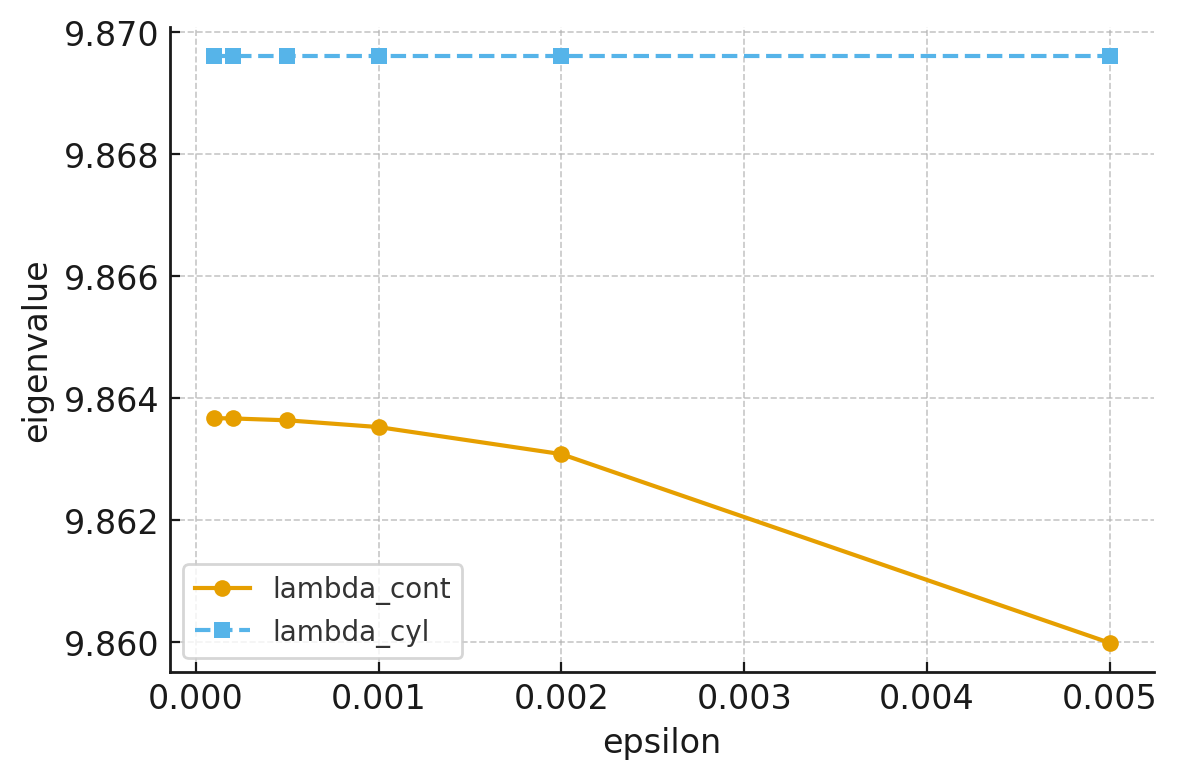} 
    \caption{Eigenvalues and epsilon parameter}
\end{figure}

\section*{Acknowledgements}
The author acknowledges the support of the High-level Talent Research Start-up Project Funding of Henan Academy of Sciences (Project No. 241819245). The author is grateful to the mathematical community for insightful discussions that have greatly improved this work. The author declares no conflicts of interest. No additional data beyond that presented in the manuscript is available. Comments and suggestions for further improvement are welcome.



\begin{thebibliography}{99}
\bibitem{a1}
L. Ahlfors, \emph{Conformal invariants: Topics in geometric function theory}, AMS Chelsea Publication, American Mathematical Society, Rhode Island, 1973.
\bibitem{a2}
T. Apel and P. Zilk, \emph{Isogeometric analysis of the Laplace eigenvalue problem on circular sectors: Regularity properties and graded meshes}, Comput. Math. Appl., 175(2024), pp. 236-254.
\bibitem{b1}
S. Basak and S. Verma, \emph{Bounds for higher Steklov and mixed Steklov Neumann eigenvalues on domains with hols}, arXiv:2412.17124, (2024).
\bibitem{b2}
Y. Bozhkov and E. Mitidieri, \emph{Conformal Killing vector fields and Rellich--type identities on Riemannian manifolds II}, Meditor. J. Math., 9(2012), pp. 1-20.
\bibitem{c2}
I. Chavel, \emph{Eigenvalues in Riemannian geometry}, Academic Press, (1984).
\bibitem{c1}
B. Chow, \emph{The Ricci flow on the $2$-sphere}, J. Differential Geom., 33(1991), pp. 325-334.
\bibitem{d1}
E. Dryden, C. Gordon, J. Moreno, J. Rowlett, and C. Villegas--Blas, \emph{The Steklov spectrum of convex polygonal domains I: Spectral finiteness}, J. Geom Anal., 91(2025), https://doi.org/10.1007/s12220-025-01922-8.
\bibitem{e1}
A. Enciso, A. Fern\'andez, D. Ruiz, and P. Sicbaldi, \emph{A Schiffer--type problem for annuli with applications to stationary planar Euler flows}, Duke Math J., 174(2025), pp. 1151-1208.
\bibitem{g1}
M. Gage and R. Hamilton, \emph{The heat equation shrinking convex plane curves}, J. Differential Geom., 23(1986), pp. 69-96.
\bibitem{g4}
D. Gilbarg and N. Trudinger, \emph{Elliptic partial differential equations of second order}, Springer, Second Edition, (2001).
\bibitem{g2}
M. Grayson, \emph{The heat equation shrinks embedded plane curves to round points}, J. Differential Geom., 26(1987), pp. 285-314.
\bibitem{g3}
M. Gurtin, \emph{An introduction to continuum mechanics}, Academic Press, (1981).
\bibitem{h1}
R. Hamilton, \emph{The Ricci flow on surface, Mathematics and General Relativity}, Contemp. Math., 71(1988), pp. 237-261.
\bibitem{k1}
N. Korevaar, \emph{Upper bounds for eigenvalues of conformally equivalent metrics}, J. Differential Geom., 37(1993), pp. 73-93.
\bibitem{m1}
J. Ma, L. Chen, and X. Cheng, \emph{Virtual element method for the Laplacian eigenvalue problem with Neumann boundary conditions}, AIMS Mathematics, 10(2025), pp. 8203-8219.
\bibitem{r1}
F. Rellich, \emph{Perturbation theory of eigenvalue problems}, Technical Report, No. 1, (1954).
\bibitem{s2}
R. Schoen and S. Yau, \emph{Lectures on harmonic maps}, International Press, (1997).
\bibitem{s1}
A. Sobnack and P. Topping, \emph{Monotonicity of the modulus under curve shortening flow}, arXiv: 2409.03098, (2024).
\bibitem{t1}
N. Trudinger, \emph{Linear elliptic operators with measurable coefficients}, Ann. Scoula Norm. Sup. Pisa., Classe di Scienze $3^{e}$ S\'erie, 27(1973), pp. 265-308.
\bibitem{y1}
P. Yang and S. Yau, \emph{Eigenvalues of the Laplacian of compact Riemann surfaces and minimal submanifolds}, Ann. Scoula Norm. Sup. Pisa Cl. Sci., 7(1980), pp. 55-63.


\end{thebibliography}
\end{document}